\documentclass[11pt,leqno]{amsart}
\setlength{\parskip}{0.3\baselineskip}

\usepackage{amsmath,amssymb,amsxtra,color,calligra,mathrsfs,comment,url}

\usepackage{xcolor} 
\colorlet{mdtRed}{red!50!black}
\definecolor{dblue}{rgb}{0,0,.6}
\usepackage[colorlinks]{hyperref}
\hypersetup{linkcolor=blue,citecolor=dblue,filecolor=dullmagenta,urlcolor=mdtRed}

\usepackage[all]{xy}
\usepackage{tikz,tikz-cd,tkz-graph,enumerate}
\usetikzlibrary{matrix,arrows,decorations.pathmorphing}


\DeclareMathOperator{\rk}{\mathrm{rank}}

\DeclareMathOperator{\Spec}{\textnormal{Spec}}

\DeclareMathOperator{\GL}{\textnormal{GL}}

\DeclareMathOperator{\Supp}{\textnormal{Supp}}
\DeclareMathOperator{\Proj}{\textnormal{Proj}}
\DeclareMathOperator{\Hilb}{\mathcal{H}{\it ilb}}
\DeclareMathOperator{\Rep}{\textnormal{Rep}}

\DeclareMathOperator{\et}{\textnormal{\'et}}

\newcommand{\m}{\mathfrak{m}}
\newcommand{\mc}[1]{\mathcal{#1}}
\newcommand{\dv}{\vee\vee}
\newcommand{\C}{\mathbb{C}}

\numberwithin{equation}{subsection}

\newtheorem{theorem}[equation]{Theorem}
\newtheorem{corollary}[equation]{Corollary}
\newtheorem{lemma}[equation]{Lemma}
\newtheorem{proposition}[equation]{Proposition}
\newtheorem*{theorem-nonumber}{Theorem}
\newtheorem{definition}[equation]{Definition}

\theoremstyle{definition}

\makeatletter
\newcommand\thankssymb[1]{\textsuperscript{\@fnsymbol{#1}}}
\newcommand\thanksletter[1]{\lowercase{\textsuperscript{\@alph{#1}}}}

\makeatother

\newcommand{\corauth}[2][]{\thanks{#2Corresponding author}} 

\begin{document}

\title[On Fundamental Group Schemes]{Fundamental Group Schemes of \uppercase{H}ilbert Scheme of $n$ Points on a Smooth Projective Surface} 

\author[A. Paul]{Arjun Paul\thankssymb{1}}

\address[A. Paul]{Department of Mathematics, Indian Institute of Technology Bombay, Powai, Mumbai 400076, Maharashtra, India.} 

\email{arjun.math.tifr@gmail.com} 

\author[R. Sebastian]{Ronnie Sebastian} 

\address[R. Sebastian]{Department of Mathematics, Indian Institute of Technology Bombay, Powai, Mumbai 400076, Maharashtra, India.} 

\email{ronnie@math.iitb.ac.in} 

\corauth[]{\thankssymb{1}}

\subjclass[2010]{14J60, 14F35, 14L15, 14C05}

\keywords{Finite vector bundle, $S$-fundamental group-scheme, Hilbert scheme, semistable bundle, Tannakian category.} 

\begin{abstract}
	Let $k$ be an algebraically closed field of characteristic $p > 3$. 
	Let $X$ be an irreducible smooth projective surface over $k$. Fix an integer 
	$n \geq 2$ and let ${\mathcal{H}{\it ilb}}_X^n$ be the Hilbert scheme parameterizing 
	effective $0$-cycles of length $n$ on $X$. The aim of the present article is to find the 
	$S$-fundamental group scheme and Nori's fundamental group scheme of the Hilbert scheme 
	$\mathcal{H}{\it ilb}_X^n$. 
\end{abstract}

\maketitle

\section{Introduction}
Let $X$ be a connected, reduced and complete scheme over a perfect field $k$ 
and let $x \in X$ be a $k$-rational point.
In \cite{No1}, Nori introduced a $k$-group scheme $\pi^N(X,x)$ associated to essentially 
finite vector bundles on $X$, and in \cite{No2}, the definition of $\pi^N(X,x)$ 
was extended to connected and reduced $k$-schemes. 
In \cite{BPS}, Biswas, Parameswaran and Subramanian defined the notion of {\it $S$-fundamental group scheme} 
$\pi^S(X, x)$ for $X$ a smooth projective curve over any algebraically closed field $k$. 
This was generalized to higher dimensional connected smooth projective $k$-schemes and studied extensively by 
Langer in \cite{La, La2}.  
In general, $\pi^S(X, x)$ carries more information than $\pi^N(X, x)$ and $\pi^{\et}(X, x)$. 
There are natural faithfully flat homomorphisms of affine $k$-group schemes 
$\pi^S(X, x) \to \pi^N(X, x) \to \pi^{\et}(X, x)$. 
The reader is referred to the introductions in 
\cite{No2} and \cite{La} for more details. Precise definitions of the above 
objects are given in the next section.
It is an interesting problem to determine $\pi^{\et}(X, x)$, $\pi^N(X, x)$ and 
$\pi^S(X, x)$ for well-known varieties. 

Let $k$ be an algebraically closed field 
and let $n\geq 2$. Let $\Hilb_X^n$ be the Hilbert scheme of 
$n$ points on an irreducible smooth projective surface $X$ over $k$. It is known that $\Hilb_X^n$ is an 
irreducible smooth projective variety of dimension $2n$ over $k$. The geometry of  $\Hilb_X^n$ has been 
extensively studied, see \cite{Fo, FGAex, Ia} and the references therein. In \cite{Beauville-83}
the author computes the topological fundamental group of $\Hilb_X^n$
when $k=\mathbb C$. 
In \cite[Theorem 1.1, Theorem 1.2]{BH} Biswas and Hogadi
show that the \'etale fundamental group 
$\pi^{\et}(\Hilb_X^n, \widetilde{nx})$ is isomorphic to the abelianization of 
$\pi^{\et}(X, x)$, for any $x \in X(k)$. Here $\widetilde{nx}$ is a point 
in $\Hilb_X^n$ mapping to $nx\in S^n(X)$.
Therefore, it is natural to ask if a similar result holds for 
$\pi^N(\Hilb_X^n, \widetilde{nx})$ and $\pi^S(\Hilb_X^n, \widetilde{nx})$. 
In this paper we answer this question affirmatively when 
the base field $k$ has characteristic $p > 3$. 
The following is the main result of this article.

\begin{theorem-nonumber}[Theorem \ref{main-thm-S-fgs}]
Let ${\rm char}(k) > 3$ and $n\geq 2$. Then there is an isomorphism of affine $k$-group schemes
$$\widetilde{f} : \pi^S(X, x)_{\rm ab} \stackrel{\sim}{\longrightarrow} \pi^S(\Hilb_X^n, \widetilde{nx}).$$
In particular, $\pi^S(\Hilb_X^n, \widetilde{nx})$ is an abelian group scheme. 
\end{theorem-nonumber}

\noindent
From the above we can easily deduce the following result.

\begin{theorem-nonumber}[Theorem \ref{main-thm-N-fgs}]
Let ${\rm char}(k) > 3$ and $n\geq 2$. Then there is an isomorphism of affine $k$-group schemes
$$\widetilde{f}^? : \pi^?(X, x)_{\rm ab} \stackrel{\sim}{\longrightarrow} \pi^?(\Hilb_X^n, \widetilde{nx})\,,$$
where $? = N,\, \et$. As above, this shows that the groups $\pi^?(\Hilb_X^n, \widetilde{nx})$
are abelian for $?=N,\et$.
\end{theorem-nonumber}

The assertion about $\pi^{\et}(\Hilb_X^n,\widetilde{nx})$ is a corollary of the 
main result in \cite{BH}, which is proved using a different method.

We briefly describe the organization of this paper. 
In \S2 we recall the main definitions and results on fundamental group schemes 
that we need from \cite{No2} and \cite{La}. In \S3 we recall and prove results that we need 
about the Hilbert scheme and the Hilbert-Chow map. The main input in this paper 
is the construction in \S4, which we briefly explain here. 
Let $\varphi:\Hilb^n_X\to S^n(X)$ 
denote the Hilbert-Chow morphism and let $\psi:X^n\to S^n(X)$ denote the  
quotient map under the natural action of $S_n$ on $X^n$. Given a numerically flat 
sheaf $E$ on $\Hilb^n_X$, we can associate to it a coherent sheaf on $X^n$,  
namely, $\psi^*\varphi_*E$. However, it is not clear if this 
coherent sheaf is numerically flat. To remedy this, we associate to $E$ a locally free 
sheaf on a large open subset of $X^n$ and take its unique reflexive extension. 
Then we use the criterion \cite[Theorem 2.2]{La2} (this criterion is proved in 
\cite{La} but stated more precisely in \cite{La2}) to check that this reflexive sheaf 
is locally free. From this construction we are able to define a homomorphism 
$\pi^S(X,x)_{\rm ab} \to \pi^S(\Hilb_X^n, \widetilde{nx})$. In \S5, we use the criterion in 
\cite[Proposition 2.21]{DMOS} to show that this homomorphism is an isomorphism.

The hypothesis on the characteristic is needed in two places. First we use it 
in Proposition \ref{prop-1} to compute the power series ring at a certain closed point 
in $S^n(X)$; see equation \eqref{formal-nbd}. Here we need that ${\rm char}(k)\neq 2$. 
In Proposition \ref{closed-immersion} we show that the above homomorphism is a closed
immersion. Here we need that ${\rm char}(k)>3$, which allows
us to show that a certain sheaf on $S^n(X)$ is locally free on a very large open 
subset using Proposition \ref{invariant-pushforward-locally-free}.

We make a remark about the strategy of our proof. 
When $k=\C$, one easily checks that the map 
$\varphi_*$ induces an isomorphism of topological fundamental groups
$\pi_1(\Hilb_X^n,\widetilde{nx})\to \pi_1(S^n(X),nx)$, and then
one shows that $\pi_1(S^n(X),nx)$ is isomorphic to $\pi_1(X,x)_{\rm ab}$, see \cite[Lemma 1, page 767]{Beauville-83}.
In \cite{BH}, the authors show that there are isomorphisms 
$\varphi_*:\pi^{\et}(\Hilb_X^n,\widetilde{nx})\stackrel{\sim}{\to}\pi^{\et}(S^n(X),nx)$
and $\pi^{\et}(S^n(X),nx)\cong \pi^{\et}(X,x)_{\rm ab}$.
In this article, however, we do not make the intermediate comparison
with the group schemes corresponding to $S^n(X)$.

Note that $\Hilb_X^1 \cong X$ and so we always assume that $n \geq 2$. 

\subsection*{Acknowledgements} We thank Indranil Biswas for suggesting this 
question to us. We are very grateful to the referee for an extremely careful reading 
of this paper and for many useful suggestions. 

\section{Fundamental Group Schemes}
In the rest of this article, unless mentioned otherwise, $k$ will denote an algebraically closed field 
of characteristic $p > 3$. 

\subsection{Nori's fundamental group scheme}\label{N-fgs}
Let $X$ be a connected, proper and reduced $k$-scheme. 
We denote by ${\rm QCoh}(X)$ the category of quasi-coherent sheaves of $\mathcal{O}_X$-modules on $X$. 
Consider the full subcategory ${\rm Vect}(X)$ of ${\rm QCoh}(X)$, 
whose objects are locally 
free coherent sheaves of $\mathcal{O}_X$-modules  (vector bundles). 
A vector bundle $E$ is said to be {\it finite} if there are distinct non-zero polynomials 
$f, g \in \mathbb{Z}[t]$ with non-negative coefficients such that $f(E) \cong g(E)$. 

Let $C$ be a connected smooth projective curve over $k$. The {\it degree} of a vector bundle $E$ on $C$ is defined 
to be the number 
$$\deg(E) := c_1(E) \cdot [C]\,.$$ 
 A vector bundle $E$ on $C$ is said to be {\it semistable} 
if for any non-zero proper subbundle $F \subset E$, we have 
$$\mu(F) := \frac{\deg(F)}{\rk(F)} \leq \ \frac{\deg(E)}{\rk(E)} =: \mu(E)\,.$$ 

\begin{definition}\label{cat-nf}
	Let $X$ be a connected, projective and reduced $k$-scheme. Let $\mathcal{C}^{\rm nf}(X)$ denote the full subcategory of 
	${\rm QCoh}(X)$ whose objects are coherent sheaves $E$ on $X$ satisfying the following two conditions: 
	\begin{enumerate}
		\item $E$ is locally free, and 
		\item for any smooth projective curve $C$ over $k$ and any morphism $f : C \longrightarrow X$, 
		the vector bundle $f^*E$ is semistable of degree $0$. 
	\end{enumerate}
\end{definition}
We call the objects of the category $\mc C^{\rm nf}(X)$ 
{\it numerically flat vector bundles} on $X$. 
In the literature these are also referred to as Nori semistable vector bundles
and this category is also denoted by $NS(X)$; for example, see \cite{Es-Me}. 
See also \cite[Remark 5.2]{La}. 
However, we reserve the term semistable to refer to slope semistable.

\begin{definition}
	A vector bundle $E$ on $X$ is said to be {\it essentially finite} if there exist two 
	numerically flat vector bundles $V_1, V_2$ and 
	finitely many finite vector bundles $F_1, \ldots, F_n$ on $X$ with 
	$V_2 \subseteq V_1 \subseteq \bigoplus\limits_{i=1}^n F_i$ such that $E \cong V_1/V_2$. 
\end{definition}

Unless otherwise specified, for any coherent sheaf $E$ on $X$, we denote by $E_x$ the fiber of $E$ at $x \in X$. 
Let ${\rm EF}(X)$ be the full subcategory of ${\rm Vect}(X)$ whose objects are essentially finite vector bundles on $X$. 
Let ${\rm Vect}_k$ be the category of finite dimensional $k$-vector spaces. 
Fix a closed point $x \in X$ and let 
$$T_x : {\rm EF}(X) \longrightarrow {\rm Vect}_k$$
be the fiber functor defined by 
sending an object $E \in {\rm EF}(X)$ to its fiber $E_x$ at $x$. 
Then the quadruple $({\rm EF}(X), \bigotimes, T_x, \mathcal{O}_X)$ is a neutral Tannakian category. 
The affine $k$-group scheme $\pi^N(X, x)$ representing the functor of $k$-algebras $\underline{\rm Aut}^{\otimes}(T_x)$ 
is called {\it Nori's fundamental group scheme} of $X$ based at $x$ 
(see \cite[Section 1]{DMOS} for definition of the functor $\underline{\rm Aut}^{\otimes}(T_x)$). 
It is shown in \cite[Proposition 4, p.~88]{No2} that 
$\pi^N(X, x) \cong \pi^N(X, y)$ for any two closed points $x, y \in X$. 

\subsection{$S$-fundamental group scheme}\label{S-fgs}

Let $E$ be a coherent sheaf on $X$. Denote by $E^\vee$ the sheaf 
$\mathscr{H}\!{\it om}(E,\mc O_X)$. 
A coherent sheaf $E$ is said to be {\it reflexive} if the natural $\mathcal{O}_X$-module 
homomorphism $E \to E^{\dv}$ is an isomorphism. 
Let $X$ be a connected smooth projective variety over $k$ of dimension $d$. 
Let $H$ be an ample divisor on $X$. 
The {\it degree} of a torsion free coherent sheaf $E$ on $X$ is defined 
to be the number 
$$\deg(E) := c_1(E) \cdot H^{d-1}\,.$$ 
A sheaf $E$ on $X$ is said to be {\it $H$-semistable} 
if for any non-zero proper subsheaf $E' \subset E$, we have 
$$\mu(E') := \frac{\deg(E')}{\rk(E')} \leq \ \frac{\deg(E)}{\rk(E)} =: \mu(E)\,.$$ 
Let $F$ denote the absolute Frobenius morphism. We say that 
$E$ is {\it strongly $H$-semistable} its Frobenius pullbacks $(F^n)^*E$ are 
$H$-semistable, for all $n\geq 0$; see \cite[p.~252]{Langer-semistable.sheaves.positive.char.}.
\begin{definition}
	Let $X$ be a connected, smooth and projective variety over $k$ of dimension $d$ and let $H$ be an ample divisor on $X$. 
	Let ${\rm Vect}_0^{s}(X)$ be the full subcategory of ${\rm QCoh}(X)$ whose objects are coherent sheaves 
	$E$ on $X$ satisfying the following three conditions: 
	\begin{enumerate}
		\item $E$ is reflexive, 
		\item $E$ is strongly $H$-semistable, and 
		\item ${\rm ch}_1(E)\cdot H^{d-1} = {\rm ch}_2(E)\cdot H^{d-2} = 0$, 
		where ${\rm ch}_i(E)$ denote the $i$-th Chern character of 
		$E$, for all $i = 1, 2$. 
	\end{enumerate}
\end{definition}

Since $X$ is smooth, it follows from \cite[Proposition 4.1]{La} that the objects of ${\rm Vect}_0^s(X)$ 
are in fact locally free sheaves and all of their Chern classes vanishes. Moreover, the category 
${\rm Vect}_0^s(X)$ does not depend on the choice of ample divisor $H$ \cite[Proposition 4.5]{La}. 

Assume that $X$ is smooth. Fix a $k$-valued point $x \in X$. 
Let $T_x : {\rm Vect}_0^s(X) \longrightarrow {\rm Vect}_k$ be the fiber functor defined by 
sending an object $E$ of ${\rm Vect}_0^s(X)$ to its fiber $E_x \in {\rm Vect}_k$ at $x$. 
Then $({\rm Vect}_0^s(X), \otimes, T_x, \mathcal{O}_X)$ is a neutral Tannaka category 
\cite[Proposition 5.5, p.~2096]{La}. 
The affine $k$-group scheme $\pi^S(X, x)$ Tannaka dual to this category is called the 
{\it S-fundamental group scheme} of $X$ with base point $x$ \cite[Definition 6.1, p.~2097]{La}. 

The following result may be well-known to experts, but we could not find a precise
reference, so we include a proof. See also the proof of 
\cite[Chapter II, Proposition 4 (d), page 88]{No2}.

\begin{lemma}\label{invariance-of-base-point}
	Let $X$ be a connected, smooth and projective $k$-scheme. 
	Then $\pi^S(X, x_1) \cong \pi^S(X, x_2)$, for all $x_1, x_2 \in X(k)$.  
\end{lemma}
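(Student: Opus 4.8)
The plan is to view $T_{x_1}$ and $T_{x_2}$ as two neutral fibre functors on the one neutral Tannakian category $\mathbf{T}:={\rm Vect}_0^s(X)$, and to produce a $\otimes$-isomorphism $T_{x_1}\xrightarrow{\sim}T_{x_2}$; transport of structure along such an isomorphism then yields the desired isomorphism $\pi^S(X,x_1)\xrightarrow{\sim}\pi^S(X,x_2)$ of affine $k$-group schemes. By the Tannakian formalism (cf. \cite[\S 1]{DMOS} and the representability results used for \cite[Proposition 2.21]{DMOS}), the functor on $k$-algebras $R\mapsto\{\otimes\text{-isomorphisms }T_{x_1}\otimes_k R\xrightarrow{\sim}T_{x_2}\otimes_k R\}$ is represented by an affine $k$-scheme $P=\underline{\mathrm{Isom}}^{\otimes}(T_{x_1},T_{x_2})$ which is a torsor under $\pi^S(X,x_1)=\underline{\mathrm{Aut}}^{\otimes}(T_{x_1})$. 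In particular $P$ is faithfully flat over $\Spec k$, hence nonempty, and any $\eta\in P(k)$ gives the isomorphism $g\mapsto\eta\circ g\circ\eta^{-1}$. Thus the entire content of the lemma is to show that $P$ has a $k$-rational point.

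To produce such a point I would first reduce to the case of a curve. Since $X$ is connected, smooth and projective of dimension $\ge 1$, cutting by successive general hyperplane sections through $x_1$ and $x_2$ (Bertini connectedness) and then normalising produces a connected smooth projective curve $C$, a morphism $f\colon C\to X$, and points $c_1,c_2\in C(k)$ with $f(c_i)=x_i$. Because the absolute Frobenius is functorial, $f^{*}$ commutes with Frobenius pullback, so by the description of the objects of ${\rm Vect}_0^s$ (they are precisely the numerically flat bundles, so that condition~(2) of Definition~\ref{cat-nf} is preserved under arbitrary pullback; cf. \cite[Proposition 4.1]{La}) the functor $f^{*}$ carries $\mathbf{T}={\rm Vect}_0^s(X)$ into ${\rm Vect}_0^s(C)$, and one has $T_{x_i}=T_{c_i}\circ f^{*}$. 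Hence a $\otimes$-isomorphism $T_{c_1}\xrightarrow{\sim}T_{c_2}$ on ${\rm Vect}_0^s(C)$ yields, by precomposition with $f^{*}$, a $\otimes$-isomorphism $T_{x_1}\xrightarrow{\sim}T_{x_2}$, so it suffices to treat $X=C$ a smooth projective curve.

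The main obstacle is the production of the rational point of $P$, and this is the place where one must go beyond the argument for Nori's group scheme (\cite[Chapter II, Proposition 4(d), page 88]{No2}): there $\pi^N$ is \emph{pro-finite}, so the analogous $\underline{\mathrm{Isom}}^{\otimes}$-torsor is a cofiltered limit of finite $k$-schemes, whose sets of $k$-points are finite, and a compactness argument (the finite intersection property in the product of these finite sets) yields a compatible point; here $\pi^S(C,c_1)$ is only pro-algebraic. Writing $\pi^S(C,c_1)=\varprojlim_\lambda G_\lambda$ (over the tensor subcategories generated by a single object, with $G_\lambda$ affine of finite type) and $P=\varprojlim_\lambda P_\lambda$ with $P_\lambda$ a $G_\lambda$-torsor, each $P_\lambda$ is a nonempty finite-type $k$-scheme, hence has a $k$-point since $k$ is algebraically closed, but $P_\lambda(k)$ is typically infinite, so the compatibility of the choices must be arranged by hand. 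I would do this by identifying $P$ with the fibre over $c_2$ of the universal $\pi^S(C,c_1)$-torsor $q\colon\widehat{C}\to C$ trivialised at $c_1$, using that $\widehat{C}$ is connected (indeed $H^0(\widehat{C},\mathcal O_{\widehat C})=H^0(C,q_{*}\mathcal O_{\widehat C})=k$, because the only irreducible representation of $\pi^S(C,c_1)$ with nonzero invariants is the trivial one), and then propagating the canonical trivialisation at $c_1$ along the $\m_{c_1}$-adic filtration (the successive lifting obstructions lie in the cohomology of an affine, in fact Artinian, scheme, hence vanish, and the remaining $\mathbb{N}$-indexed choice is made compatibly by a countable surjective limit) to get the required rational datum; a variant is to take any point $\xi\in P$, obtain $\pi^S(C,c_1)_{\kappa(\xi)}\cong\pi^S(C,c_2)_{\kappa(\xi)}$, and descend along a finitely generated subring of $\kappa(\xi)$ by spreading out and specialising. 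Controlling this last, pro-algebraic, step is exactly where the difficulty lies; once $\eta\in P(k)$ is in hand the conclusion is formal.
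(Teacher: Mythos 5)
Your core strategy is the same as the paper's: both proofs come down to exhibiting a $\otimes$-isomorphism $T_{x_1}\cong T_{x_2}$, equivalently a $k$-rational trivialization over $x_2$ of the universal $\pi^S(X,x_1)$-torsor $p\colon P\to X$. The paper does this step directly: it fixes points $\tilde x_1,\tilde x_2\in P$ with $p(\tilde x_i)=x_i$, obtains isomorphisms $\xi_i\colon G\to P_{x_i}$, and sets $\eta_{\mathcal V}=\widetilde{\xi_2}^{-1}\circ\widetilde{\xi_1}$, checking naturality because every morphism in ${\rm Vect}_0^s(X)$ comes from a map of $G$-modules; i.e.\ it treats the existence of a $k$-point of the fibre $P_{x_2}$ (triviality of a torsor under an affine group scheme over the algebraically closed field $k$, each algebraic quotient of which is a nonempty finite-type $k$-scheme) as given. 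Your reduction to a curve through $x_1,x_2$ is harmless but buys nothing: it neither changes the pro-algebraic nature of the group nor simplifies the torsor question, which is insensitive to the variety carrying the Tannakian category.

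The genuine gap is that you never actually produce the point $\eta\in P(k)$, and you say so yourself; moreover the substitutes you sketch do not work as stated. Propagating the canonical trivialization at $c_1$ along the $\m_{c_1}$-adic filtration only ever produces data over infinitesimal neighbourhoods of $c_1$; no amount of formal lifting at $c_1$ reaches the fibre over the distinct point $c_2$, so that route is a non-starter. The ``countable surjective limit'' argument requires the index system (tensor subcategories generated by one object, ordered by inclusion) to admit a countable cofinal subsystem, which you have not established and which is not automatic; for general cofiltered systems of nonempty sets with surjective transition maps the limit can be empty, so countability (or some additional structure) is precisely what must be argued. The spreading-out-and-specialization variant is likewise only named, not carried out. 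So, measured against the paper, your proposal stalls exactly at the one step the paper performs (by choosing $\tilde x_1,\tilde x_2$), and everything you do before and after that step is either identical in content to the paper's argument or superfluous. To complete your write-up you would either justify the rational point as the paper implicitly does (triviality of torsors under affine $k$-group schemes over algebraically closed $k$, e.g.\ via the standard Tannakian statement that fibre functors over such a field are isomorphic), or supply an honest proof of the limit argument with surjective transition maps on $k$-points of the algebraic quotient torsors.
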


\begin{proof}
	Since $\pi^S(X, x)$ is the affine $k$-group scheme representing the functor of $k$-algebras 
	$\underline{\rm Aut}^{\otimes}(T_x)$, where $T_x$ is the fiber functor 
	$T_x : {\rm Vect}_0^s(X) \longrightarrow {\rm Vect}_k$, it suffices to show that, 
	for any two points $x_1, x_2 \in X(k)$,
	the fiber functors $T_{x_1}$ and $T_{x_2}$ are isomorphic. 
	Given any object $\mc V \in {\rm Vect}_0^s(X)$, we need to define a {\it natural} $k$-linear isomorphism 
	$$\eta_{\mc V} : T_{x_1}(\mc V) = {\mc V}_{x_1} \longrightarrow {\mc V}_{x_2} = T_{x_2}(\mc V)\,;$$ 
	meaning that for any morphism $f : \mc V \to \mc V'$ of objects in ${\rm Vect}_0^s(X)$, 
	the following diagram should commute. 
	\begin{equation}\label{functoriality}
	\xymatrix{
		T_{x_1}(\mc V) \ar[rr]^{T_x(f)} \ar[d]^{\eta_{\mc V}} && T_{x_1}(\mc V') \ar[d]^{\eta_{\mc V'}} \\ 
		T_{x_2}(\mc V) \ar[rr]^{T_y(f)} && T_{x_2}(\mc V') 
	}
	\end{equation}
	
	For any group scheme $H$ over $k$, denote by $\Rep_k(H)$ the category of 
	representations of $H$ into finite dimensional $k$-vector spaces. 
	Let $G = \pi^S(X, x_1)$. Then there is an equivalence of categories 
	$\zeta : {\rm Vect}_0^s(X) \stackrel{\sim}{\longrightarrow} {\rm Rep}_k(G)$ 
	and the inverse of this equivalence of categories defines a principal $G$-bundle $p : P \to X$, 
	(see \cite[Proposition 2.9]{No1} for the construction), 
	known as the {\it $S$-universal cover of $X$} (see \cite[p. 2097]{La}).  
	This associates to a $G$-module $V$ an object $\mc V := P \times^G V$ in the category ${\rm Vect}_0^s(X)$; 
	moreover, any morphism $\mc V \to \mc V'$ in the category ${\rm Vect}_0^s(X)$ comes from a $G$-module 
	homomorphism $V \to V'$ in ${\rm Rep}_k(G)$. 
	
	Fix two points $\tilde{x}_1, \,\tilde{x}_2 \in P$ such that $p(\tilde{x}_i) = x_i$, for $i = 1, 2$. 
	Then we have isomorphisms 
	\begin{equation*}
	\xi_i : G \longrightarrow P_{x_i}, \qquad \,\, i = 1, 2\,. 
	\end{equation*}
	Let $\rho : G \to \GL(V)$ be a finite dimensional linear representation and let 
	$\mc V := P \times^G V$ be the associated vector bundle on $X$. 
	Then we have $k$-linear isomorphisms 
	$$\widetilde{\xi_i} : \mc{V}_{x_i} = P_{x_i} \times^G V \stackrel{\xi_i^{-1}}{\cong} G \times^G V 
	\stackrel{\simeq}{\longrightarrow} V$$ 
	induced by $\xi_i$, for all $i = 1, 2$.  
	This gives a $k$-linear isomorphism of the fibers 
	$$\widetilde{\xi_2}^{-1}\circ\widetilde{\xi_1} =: \eta_{\mc V} : \mc{V}_{x_1} \longrightarrow \mc{V}_{x_2}\,.$$ 
	Since any homomorphism $f : \mc V \to \mc V'$ of objects in ${\rm Vect}_0^s(X)$ comes from 
	a $G$-module homomorphism $\tilde f : V \to V'$, it follows from above construction that the 
	above diagram in \eqref{functoriality} commutes. 
\end{proof}

\section{Hilbert-Chow Morphism}
\subsection{Hilbert scheme of length $n$ cycles}
From now on we denote by $X$ an irreducible smooth projective surface over $k$. 
For an integer $n \geq 2$, let $S_n$ be the permutation group 
of $n$ symbols. Then $S_n$ acts on the product $X^n$ and the associated quotient $S^n(X) = X^n/S_n$ is 
a normal projective variety of dimension $2n$ over $k$. Note that $S^n(X)$ is not smooth. Its smooth locus 
$S^n(X)_{\rm sm} \subset S^n(X)$ is the open dense subscheme consisting of reduced effective $0$-cycles of length $n$ in $X$. 
Since $\dim_k(X) = 2$, the singular locus $S^n(X)_{\rm sing} := S^n(X) \setminus S^n(X)_{\rm sm}$ is a closed 
subscheme of codimension $2$ in $S^n(X)$. 

Let $\Hilb_X^n$ be the Hilbert scheme parametrizing effective $0$-cycles of length $n$ in $X$. 
This is an irreducible smooth projective scheme of dimension $2n$ over $k$. Consider the Hilbert-Chow morphism 
\begin{equation}\label{chow-map}
	\varphi : \Hilb_X^n \longrightarrow S^n(X)\, ,
\end{equation}
given by sending $Z \in \Hilb_X^n$ to 
$$\sum\limits_{p \in \Supp(Z)} \ell(\mathcal{O}_{Z, p}) [p] \in S^n(X),$$ 
where 
$$\Supp(Z) = \{p \in X : \mathcal{O}_{Z, p} \neq 0 \}$$ 
denotes the support of the $0$-cycle $Z$ in $X$ and 
$\ell(\mathcal{O}_{Z, p})$ the length of the local ring $\mathcal{O}_{Z, p}$ as a module over itself. 
It is well known that $\varphi$ is a proper morphism.

\subsection{Stratification of $S^n(X)$}\label{stratification}
A point $y \in S^n(X)$ can be written as 
$$\sum\limits_{j=1}^r n_j x_j,$$ 
where $x_1, \ldots, x_r \in X$ are distinct 
points with multiplicities 
\begin{equation}\label{stratification-indices}
	n_1 \geq n_2 \geq \cdots \geq n_r \in \mathbb{Z}_{>0}, 
\end{equation}
respectively, such that 
$\sum_{j=1}^r n_j = n$. 
The $r$-tuple of positive integers 
$$\langle n_1, n_2, \ldots, n_r \rangle$$ 
is called the \textit{type} of $y$. 
Let $Z_{\langle n_1, n_2, \ldots, n_r\rangle}$ denote the locus of points in $S^n(X)$ of type 
$\langle n_1, n_2, \ldots, n_r\rangle$. The fiber $\varphi^{-1}(y)$ has dimension $n-r$, for all 
$y \in Z_{\langle n_1, n_2, \ldots, n_r\rangle}$ (see \cite[p.~667]{Fo}). 
The dimension of the locus of points of type $\langle n_1,n_2,\ldots,n_r\rangle$ is $2r$. 
From this the following lemma follows. 

\begin{lemma}\label{strata-dimension}
The dimension of the subset $\varphi^{-1}(Z_{\langle n_1, n_2, \ldots, n_r\rangle})$
is $n+r$.
\end{lemma}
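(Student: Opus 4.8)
The plan is to combine the two dimension counts recalled just above the statement: the locus $Z_{\langle n_1, n_2, \ldots, n_r\rangle}$ of points of type $\langle n_1, \ldots, n_r\rangle$ has dimension $2r$ in $S^n(X)$, and every fiber $\varphi^{-1}(y)$ over a point $y$ of this type has dimension $n - r$. Since $\varphi$ is proper, its restriction
$$\varphi^{-1}\big(Z_{\langle n_1, \ldots, n_r\rangle}\big) \longrightarrow Z_{\langle n_1, \ldots, n_r\rangle}$$
is a proper morphism all of whose fibers have the \emph{same} dimension $n - r$. By the theorem on the dimension of fibers of a morphism (applied to this map of finite-type $k$-schemes), the total space has dimension equal to the dimension of the base plus the common fiber dimension, namely $2r + (n - r) = n + r$.

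First I would make the source of the two input facts precise: the fiber dimension statement is \cite[p.~667]{Fo} as cited, and the equality $\dim Z_{\langle n_1, \ldots, n_r\rangle} = 2r$ is the assertion immediately preceding the lemma (it holds because a point of type $\langle n_1, \ldots, n_r\rangle$ is determined by an unordered choice of $r$ distinct points of the surface $X$, so that locus is a Zariski-open subset of $S^r(X)$, hence $2r$-dimensional). Then I would invoke the standard inequality that for a dominant morphism of irreducible varieties $f : V \to W$, one has $\dim V = \dim W + \dim(\text{generic fiber})$, together with upper semicontinuity of fiber dimension; since here \emph{all} fibers have dimension exactly $n - r$, the generic fiber dimension is $n - r$ and the formula gives $\dim \varphi^{-1}(Z_{\langle n_1, \ldots, n_r\rangle}) = 2r + (n - r) = n + r$.

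One small technical point to dispatch: $Z_{\langle n_1, \ldots, n_r\rangle}$ need not be closed, only locally closed, so strictly speaking one should work with its closure, or equivalently apply the fiber-dimension theorem to $\varphi$ restricted over the locally closed subscheme $Z_{\langle n_1, \ldots, n_r\rangle}$ with its reduced induced structure; since dimension is insensitive to passing to a dense open or to the reduced structure, this causes no difficulty. I expect this to be the only place requiring a word of care; the rest is a direct application of the fiber dimension theorem, so there is no real obstacle. The key step — and the only one with any content — is recognizing that the constancy of the fiber dimension across the stratum is exactly what lets one read off the dimension of the preimage as base dimension plus fiber dimension.
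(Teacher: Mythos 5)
Your argument is correct and is exactly the paper's: the lemma is deduced immediately from the two facts recalled just before it, namely that the stratum $Z_{\langle n_1,\ldots,n_r\rangle}$ has dimension $2r$ and every fiber of $\varphi$ over it has dimension $n-r$ (\cite[p.~667]{Fo}), so the preimage has dimension $2r+(n-r)=n+r$; your extra care about the stratum being only locally closed and about applying the fiber-dimension theorem is a reasonable way to make this explicit. One small inaccuracy in a side remark: the stratum is in general not an open subset of $S^r(X)$ (it is the image of the open locus of pairwise-distinct tuples in $X^r$ under a finite map, a quotient only by the subgroup permuting equal multiplicities), but its dimension is $2r$ in any case, so the conclusion is unaffected.
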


\subsection{Fibers of Hilbert-Chow morphism}
Let $W \subset S^n(X)$ denote the open subset consisting of points of type 
$\langle 1,1,1,\ldots,1\rangle$ and $\langle 2,1,1,\ldots,1\rangle$.
Let $V$ denote the open subset $\varphi^{-1}(W)$ and let 
\begin{equation}\label{eqn-phi'}
	\varphi : V \longrightarrow W 
\end{equation}
be the restriction of the morphism $\varphi$ in \eqref{chow-map} to $V$. 
It follows from Lemma \ref{strata-dimension} that the dimension of $\Hilb_X^n\setminus V$ is $n+n-2 = 2n-2$ 
and hence ${\rm codim}_{\Hilb_X^n}(\Hilb_X^n \setminus V) = 2$. 

It was shown in \cite[Lemma 4.3, p.~668]{Fo} that for any point $q \in S^n(X)$ of type $\langle 2, 1, 1, \ldots, 1\rangle$, 
the schematic fiber $\varphi^{-1}(q)$, with its reduced structure, is isomorphic to $\mathbb{P}_k^1$. 
We need that ${\varphi}^{-1}(q)$ is reduced. 
We could not find a precise reference for Proposition \ref{prop-1}, 
which is well known to experts, so we include a proof. 

First we recall the following result. 
\begin{lemma}\label{lem-1}
	Let $I$ be an ideal of a commutative ring $A$ with identity. 
	Let $A[It] := \bigoplus\limits_{i=0}^{\infty} I^i t^i \subset A[t]$ be the Rees algebra of $I$ in the polynomial ring $A[t]$. 
	Let $\pi : \Proj(A[It]) \to \Spec A$ be the associated projective $A$-scheme. 
	For an $A$-algebra $B$, consider the graded $A$-algebra structure on $A[It] \otimes_A B$ given by 
	$(A[It] \otimes_A B)_d := (I^d \otimes_A B) t^d$, for all $d \geq 0$. 
	Then we have a canonical isomorphism of $A$-schemes 
	\begin{equation*}
	\psi : \Proj(A[It] \otimes_A B) \stackrel{\simeq}{\longrightarrow} \Proj(A[It]) \times_{\Spec A} \Spec B\,. 
	\end{equation*}
\end{lemma}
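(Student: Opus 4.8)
The plan is to give a direct proof, chart by chart. The Rees structure of $S := A[It]$ (so $S_0 = A$ and $S_d = I^d t^d$) enters only to identify the grading on $A[It]\otimes_A B$ specified in the statement with the intrinsic grading $(S\otimes_A B)_d := S_d\otimes_A B$ of the base change; the rest is the standard chart-by-chart proof that $\Proj$ commutes with base change, which I now carry out. For a homogeneous $f\in S_d$ with $d>0$ put $\bar f := f\otimes 1\in(S\otimes_A B)_d$, again homogeneous of positive degree. Since localization commutes with base change along $A\to B$, one gets an isomorphism of $\mathbb Z$-graded rings $(S\otimes_A B)_{\bar f}\cong S_f\otimes_A B$ (with $f,\bar f$ in degree $d$); passing to degree-zero parts, which also commutes with $-\otimes_A B$, gives a natural ring isomorphism $(S\otimes_A B)_{(\bar f)}\cong S_{(f)}\otimes_A B$, and therefore on spectra
$$D_+(\bar f)\;=\;\Spec\big(S_{(f)}\otimes_A B\big)\;=\;\Spec\big(S_{(f)}\big)\times_{\Spec A}\Spec B\;=\;D_+(f)\times_{\Spec A}\Spec B.$$

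Next I would verify that these affine opens cover both sides. The $D_+(f)$ with $f$ homogeneous of positive degree cover $\Proj(S)$, so the $D_+(f)\times_{\Spec A}\Spec B$ cover $\Proj(S)\times_{\Spec A}\Spec B$. For $\Proj(S\otimes_A B)$: a homogeneous prime $\mathfrak p$ not containing $(S\otimes_A B)_+$ admits some $f'\in(S\otimes_A B)_d\setminus\mathfrak p$ with $d>0$; writing $f'=\sum_j(g_j\otimes 1)(1\otimes b_j)$ with $g_j\in S_d$ and $b_j\in B$, primeness of $\mathfrak p$ forces some factor $g_j\otimes 1\notin\mathfrak p$, so $\mathfrak p\in D_+(g_j\otimes 1)$. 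Hence $\{D_+(g\otimes 1)\}$, over $g$ homogeneous of positive degree, covers $\Proj(S\otimes_A B)$.

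Finally I would glue. On overlaps, $D_+(f)\cap D_+(g)=D_+(fg)$ and $D_+(\bar f)\cap D_+(\bar g)=D_+(\overline{fg})$, and the isomorphisms above are all induced by the same localization maps $S_{(f)}\to S_{(fg)}$ and $S_{(f)}\otimes_A B\to S_{(fg)}\otimes_A B$, so they agree on overlaps and glue to the asserted isomorphism $\psi$, which is canonical since no choices were made. I do not foresee a genuine obstacle. The single point that warrants care — and the reason this lemma is worth stating with a proof — is that the natural map $I^d\otimes_A B\to B$ need not be injective, so $A[It]\otimes_A B$ really is the abstract graded algebra in the statement and need not be a graded subring of $B[t]$; since the argument above nowhere uses such injectivity, it goes through unchanged.
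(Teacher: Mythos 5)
Your proof is correct. Note, though, that the paper does not argue this lemma at all: its proof is a one-line citation of the Stacks Project result that $\Proj$ commutes with base change (Lemma 26.11.6, Tag 01MX), after observing that the grading prescribed in the statement, $(A[It]\otimes_A B)_d=(I^d\otimes_A B)t^d$, is exactly the intrinsic grading $S_d\otimes_A B$ of the base-changed graded ring $S=A[It]$. What you have written is, in effect, the standard proof of that cited lemma: the chart-level identification $(S\otimes_A B)_{(\bar f)}\cong S_{(f)}\otimes_A B$ via graded localization, the verification that the opens $D_+(g\otimes 1)$ cover $\Proj(S\otimes_A B)$ (your primality argument on a sum of simple tensors is the right way to see this), and the gluing over $D_+(fg)$. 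All steps are sound, and you correctly flag the one genuinely delicate point, namely that $I^d\otimes_A B\to B$ need not be injective, so the base change must be treated as an abstract graded algebra rather than a subring of $B[t]$ — which is precisely why the paper states the grading explicitly before invoking the reference. So your route buys self-containedness at the cost of length, while the paper's buys brevity by delegating to a standard reference; mathematically they are the same argument.
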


\begin{proof}
	Follows from \cite[Lemma 26.11.6., \href{https://stacks.math.columbia.edu/tag/01MX}{Tag 01MX}]{Stk}. 
\end{proof}

\begin{proposition}\label{prop-1}
Assume that ${\rm char}(k) \neq 2$. 
Let $q \in W$ be a point of type $\langle 2, 1, 1, \ldots, 1\rangle$.
The scheme theoretic fiber $\varphi^{-1}(q)$ is a reduced subscheme of $V$. 
\end{proposition}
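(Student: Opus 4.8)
The plan is to reduce the question to a purely local computation in the formal (or étale) neighbourhood of the point $q$ and then identify the fiber explicitly as $\mathbb{P}^1_k$ with its reduced scheme structure. Write $q = 2x_0 + x_2 + \cdots + x_r$ with $x_0, x_2, \ldots, x_r \in X$ distinct. Since the Hilbert-Chow morphism is compatible with disjoint unions of the surface, and since near the points $x_2, \ldots, x_r$ the morphism $\varphi$ is an isomorphism onto a neighbourhood in $S^n(X)$, the fiber $\varphi^{-1}(q)$ is isomorphic to the fiber over $2x_0$ of the Hilbert-Chow morphism $\Hilb^2_X \to S^2(X)$. So I would first reduce to the case $n = 2$: the only interesting local geometry is at the doubled point, and étale-locally on $X$ we may replace $X$ by $\mathbb{A}^2_k$ with coordinates centred at $x_0$.

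Next I would make the $n=2$ computation explicit. A length-$2$ subscheme of $\mathbb{A}^2 = \Spec k[u,v]$ supported at the origin is cut out by an ideal $I$ with $k[u,v]/I$ of length $2$ and $\sqrt{I} = (u,v)$; such ideals are exactly $I = (u,v)^2 + (\alpha u + \beta v)$ for $[\alpha:\beta] \in \mathbb{P}^1$, i.e. the fiber over the doubled origin is set-theoretically $\mathbb{P}^1$, a point of it recording the "tangent direction'' of the length-$2$ cluster. To get the \emph{scheme} structure one should realize this fiber as an honest fiber product: using Lemma \ref{lem-1}, identify a neighbourhood of the relevant locus in $\Hilb^2_X$ with a $\Proj$ of a Rees algebra over (an étale chart of) $S^2(X)$, base-change along the closed point $q \hookrightarrow S^2(X)$, and read off the resulting $\Proj$ over $k$. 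Concretely, $S^2(\mathbb{A}^2)$ is the invariant ring $k[u_1,v_1,u_2,v_2]^{S_2}$; passing to the "difference'' coordinates $a = u_1 - u_2$, $b = v_1 - v_2$ (here is where ${\rm char}(k)\neq 2$ is used, so that $2$ is invertible and these together with the symmetric functions $u_1+u_2, v_1+v_2, u_1u_2 - \tfrac14 a^2$, etc., form a good coordinate system — this is precisely equation \eqref{formal-nbd}), the diagonal $2x_0$ is the vanishing of $(a^2, ab, b^2)$ after taking invariants, and the Hilbert scheme is the blow-up of $S^2(X)$ along this (or the $\Proj$ of the corresponding Rees algebra). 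The scheme-theoretic fiber of that $\Proj$ over the closed point where the ideal collapses is then $\Proj k[a,b]/(\text{a power of }(a,b)) \cong \Proj(k[a,b]/(a,b)^2)$-type object, which upon computation is $\mathbb{P}^1_k$ — and reduced, because the graded ring modulo the irrelevant ideal has no nilpotents in positive degree. The key point is that the Rees algebra of $(a^2,ab,b^2)$ (equivalently, its associated blow-up) has fiber over the origin equal to $\Proj$ of the symmetric algebra on the degree-one piece, since $(a^2,ab,b^2) = (a,b)^2$ is the square of a smooth ideal, hence has no higher relations forcing nilpotents.

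The main obstacle is the scheme-theoretic, as opposed to set-theoretic, control: Fogarty already gives $\varphi^{-1}(q)_{\rm red} \cong \mathbb{P}^1$, so all the work is in showing there are no embedded or thickened components. I would handle this via the Rees-algebra model: the fiber of $\Proj(A[It])$ over a point $\mathfrak{p} \in \Spec A$ is $\Proj\big(\bigoplus_d (I^d/\mathfrak{p}I^d)\big)$, and when $I = (a,b)^2$ localized at the maximal ideal $\mathfrak{m} = (a,b)$ one computes $I^d/\mathfrak{m}I^d = (a,b)^{2d}/(a,b)^{2d+1}$, which is the degree-$2d$ part of the polynomial ring $k[a,b]$ — a reduced graded ring whose $\Proj$ is $\mathbb{P}^1_k$. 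Assembling this with the reduction steps (disjoint-union compatibility of Hilbert-Chow, étale localization on $X$, and Lemma \ref{lem-1} to pass $\Proj$ through the base change) completes the argument; the characteristic hypothesis enters only through the explicit coordinatization \eqref{formal-nbd} of the formal neighbourhood of $q$ in $S^n(X)$.
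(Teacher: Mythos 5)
Your overall strategy is the same as the paper's: use Fogarty's result that over $W$ the Hilbert--Chow morphism is the blowup of $S^n(X)$ along the stratum $Z$ of type $\langle 2,1,\ldots,1\rangle$, pass to the formal neighbourhood of $q$ (where ${\rm char}(k)\neq 2$ gives the coordinates of \eqref{formal-nbd} and identifies the ideal of $Z$ as $(u,v,w)$ with $u=a^2$, $v=ab$, $w=b^2$), and compute the scheme-theoretic fiber of the $\Proj$ of the Rees algebra via Lemma \ref{lem-1}. However, the decisive computation in your write-up is carried out in the wrong ring. The Hilbert scheme is the blowup of the \emph{symmetric product}, so the fiber you must compute is $\Proj\bigl(\bigoplus_d J^d/\m J^d\bigr)$ for $J=(u,v,w)$ inside $A=\widehat{\mc O}_{W,q}\cong k[[u,v,w,x',y',\ldots]]/(uw-v^2)$; instead you compute $\bigoplus_d I^d/\m I^d$ for $I=(a,b)^2$ inside the regular ring $k[a,b]$, which is the fiber of the blowup of the plane of difference coordinates (equivalently of $X^n$ along the diagonal), not of $S^n(X)$ along $Z$. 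Your justification ``$(a,b)^2$ is the square of a smooth ideal, hence no higher relations'' is a statement about the upstairs ring and says nothing about $J\subset A$: in the fiber cone of $J$ the relation $UW=V^2$ genuinely appears (the fiber is a conic in $\mathbb{P}^2$, not all of $\mathbb{P}^2$), so in particular your claim that the fiber is ``$\Proj$ of the symmetric algebra on the degree-one piece'' is false as stated (that $\Proj$ would be $\mathbb{P}^2$, since $J/\m J$ is $3$-dimensional), and the intermediate assertion ``$\Proj k[a,b]/(\text{a power of }(a,b))\cong\Proj(k[a,b]/(a,b)^2)$'' is meaningless, both schemes being empty.

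The two computations do happen to give the same answer, because the fiber cone $\bigoplus_d J^d/\m J^d$ is isomorphic to the second Veronese subring $\bigoplus_d k[a,b]_{2d}\cong k[U,V,W]/(UW-V^2)$, which is reduced with $\Proj\cong\mathbb{P}^1_k$; but this identification is exactly the point that needs proof, and it is missing. Concretely, one must check that $J^d$ is the invariant (even) part of $(a,b)^{2d}$, that $\m J^d$ cuts out precisely the part of degree $\geq 2d+2$ together with the contributions of the extra variables $x',y',x_3,\ldots$, and hence that no extra relations or nilpotents arise in the fiber cone of the \emph{singular} ring $A$. This is precisely what the paper settles by writing the blowup of $A$ along $\widehat J$ in explicit charts $R_1,R_2,R_3$ and verifying $R_1\otimes_A(A/\m)\cong k[T]\cong R_3\otimes_A(A/\m)$. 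Either supply the Veronese identification carefully (working with $J^d\subset A$, not $(a,b)^{2d}\subset k[a,b]$), or do the chart computation; as written, the reduction to $n=2$ and the étale/formal localization are fine in outline, but the core reducedness argument has a gap.
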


\begin{proof}
Let $\tilde{q} \in X^n$ be a point such that $\tilde{q} \mapsto q$ under the 
natural map $\psi : X^n \to S^n(X)$. The formal neighbourhood of $\tilde{q}$ is given by the spectrum of the local ring 
$$\widehat{\mc O}_{X^n, \tilde q} = k[[x_1, y_1, x_2, y_2, \ldots, x_n, y_n]]\,.$$ 
There is an inclusion $\widehat{\mc O}_{W, q} \hookrightarrow \widehat{\mc O}_{X^n, \tilde q}$. 
By the discussion in the paragraph just before \cite[Theorem 7.3.4, p.~170]{FGAex}, we have 
\begin{equation}\label{formal-nbd}
\widehat{\mc O}_{W, q} = k[[u, v, w, x', y', x_3, y_3, \ldots, x_n, y_n]]/(uw - v^2)\,, 
\end{equation}
where 
$x = x_1 - x_2$, $y = y_1 - y_2$, $x' = x_1 + x_2$, $y' = y_1 + y_2$, $u = x^2$, $v = xy$ and $w = y^2$. 
Here we are using the assumption ${\rm char}(k) \neq 2$. 

Let $Z \subset W$ denote the irreducible closed subset consisting of points of type 
$\langle 2, 1, 1, \ldots, 1\rangle$. Let $J$ denote the stalk at $q$ of the ideal sheaf of $Z$ in the local ring 
$\mc O_{W, q}$ and let $\widehat{J}$ denote its image in $\widehat{\mc O}_{W, q}$. 
Now $Z$ is contained in the image $\psi(X^{n-1}) \subset S^n(X)$, where the inclusion $X^{n-1} \hookrightarrow X^n$ 
is given by 
$$(x, x_3, x_4, \ldots, x_n)\, \longmapsto\, (x, x, x_3, \ldots, x_n)\,.$$ 
Clearly, the ideal of $X^{n-1}$ in $\widehat{\mc O}_{X^n, \tilde q}$ is given by $x_1 - x_2 = y_1 - y_2 = 0$. 
From this, we conclude that $\widehat{J}$ is the kernel of the composite homomorphism 
$$\widehat{\mc O}_{W, q}\hookrightarrow \widehat{\mc O}_{X^n, \tilde{q}}\, 
\twoheadrightarrow\, \widehat{\mc O}_{X^n, \tilde q}/(x, y)\,,$$ 
where $x = x_1 - x_2$ and $y = y_1 - y_2$. 
This proves that $\widehat{J} = (u, v, w)$. 

By \cite[Lemma 4.4]{Fo} the map $\varphi$ is the blowup of 
$W$ along $Z$. Let $\mc O_{W, q}[tJ]$ denote the Rees algebra 
of the ideal $J$. By Lemma \ref{lem-1}, the schematic fiber $\varphi^{-1}(q)$ is 
$${\rm Proj}\ (\mc O_{W, q}[tJ]) \times_{\Spec (\mc O_{W, q})} \Spec(\mc O_{W, q}/\m_q) 
\cong {\rm Proj}\ (\mc O_{W, q}[tJ] \otimes_{\mc O_{W, q}} (\mc O_{W, q}/\m_q))\,,$$ 
where $\m_q$ is the maximal ideal of the local ring $\mathcal{O}_{W, q}$ at $q$. 
It follows from the isomorphism 
\begin{align*}
	\mc O_{W, q}[tJ]\otimes_{\mc O_{W, q}} (\mc O_{W, q}/\m_q) & \cong \mc O_{W, q}[tJ] \otimes_{\mc O_{W, q}} 
	(\widehat{\mc O}_{W, q}/\widehat{\m}_q)\\ 
	& \cong \widehat{\mc O}_{W, q}[t\widehat J] \otimes_{\widehat{\mc O}_{W, q}} (\widehat{\mc O}_{W, q}/\widehat{\m}_q) 
\end{align*}
that the schematic fiber $\varphi^{-1}(q)$ is 
$$
{\rm Proj}\ (\widehat{\mc O}_{W, q}[t\widehat J]\otimes_{\widehat{\mc O}_{W, q}}(\widehat{\mc O}_{W, q}/\widehat{\m}_q)) \cong 
{\rm Proj}\ (\widehat{\mc O}_{W, q}[t\widehat J]) \times_{{\rm Spec}(\widehat{\mc O}_{W, q})}
{\rm Spec}(\widehat{\mc O}_{W, q}/\widehat{\m}_q)\,.
$$ 

\noindent 
Write 
$$A:=\widehat{\mc O}_{W, q} = k[[u, v, w, x', y', x_3, y_3, \ldots, x_n, y_n]]/(uw - v^2)\,. $$ 
It is clear that the maximal ideal $\m:=\widehat{\m}_q$ of $A$ is given by 
$$\m=(u, v, w, x', y', x_3, y_3, \ldots, x_n, y_n)A.$$
First let us understand the scheme ${\rm Proj}(A[t\widehat{J}])$. 
This scheme is covered by affine open subsets given by ${\rm Spec}$ of the following three affine $k$-algebras: 
\begin{eqnarray}
	R_1 &:=& \Big(k[[u, v, w, x', y', x_3, y_3, \ldots, x_n, y_n]]/(uw - v^2)\Big)\left[\dfrac{v}{u},\dfrac{w}{u}\right],\nonumber\\
	R_2 &:=& \Big(k[[u, v, w, x', y', x_3, y_3, \ldots, x_n, y_n]]/(uw - v^2)\Big)\left[\dfrac{u}{v},\dfrac{w}{v}\right],\nonumber\\
	R_3 &:=& \Big(k[[u, v, w, x', y', x_3, y_3, \ldots, x_n, y_n]]/(uw - v^2)\Big)\left[\dfrac{u}{w},\dfrac{v}{w}\right]\nonumber .
\end{eqnarray}
Let us first consider the ring $R_1$. In this ring, $\frac{w}{u}=\frac{wu}{u^2}=\frac{v^2}{u^2}$. Therefore,
we get that 
$$R_1=\Big(k[[u, v, w, x', y', x_3, y_3, \ldots, x_n, y_n]]/(uw - v^2)\Big)\left[\dfrac{v}{u}\right].$$
Similarly, since $\frac{u}{w} = \frac{uw}{w^2}=\frac{v^2}{w^2}$, we get that 
$$R_3=\Big(k[[u, v, w, x', y', x_3, y_3, \ldots, x_n, y_n]]/(uw - v^2)\Big)\left[\dfrac{v}{w}\right].$$
Further, in $R_2$ we have $\frac{u}{v}\frac{w}{v}=1$. Therefore, 
$$R_2=\Big(k[[u, v, w, x', y', x_3, y_3, \ldots, x_n, y_n]]/(uw - v^2)\Big)\left[\dfrac{v}{u},\dfrac{u}{v}\right].$$
It is now clear that the scheme ${\rm Proj}(A[t\widehat{J}])$ is covered by ${\rm Spec}\, R_1$ 
and ${\rm Spec}\,R_3$, since ${\rm Spec}\,R_2$ is an open subset of each of these. 
Now we need to compute 
$$R_1\otimes_A (A/\m)\qquad \textnormal{and} \qquad R_3\otimes_A (A/\m).$$ 
Let us first write 
$$R_1=A[T]/(Tu-v).$$ 
Now note that $(Tu-v)\subset \m A[T]$ since $\m$ contains $u$ and $v$. Therefore we get 
\begin{eqnarray}
R_1\otimes_A (A/\m) & \cong & R_1/\m R_1 \nonumber \\
& \cong & A[T]/\m A[T] \cong k[T] \nonumber. 
\end{eqnarray}
Similarly, we have $R_3\otimes_A (A/\m) \cong k[T]$. 
Thus we have proved that the scheme theoretic fiber $\varphi^{-1}(q)$ is reduced and is isomorphic to $\mathbb{P}^1_k$. 
\end{proof}

\section{Homomorphism of $S$-fundamental Group Schemes}
Fix a closed point $x\in X$ and let $\widetilde{nx}$ be a point 
in $\Hilb_X^n$ mapping to $nx\in S^n(X)$.
In this section we construct a homomorphism of $S$-fundamental group schemes 
$$\pi^S(X,x)_{\rm ab} \longrightarrow \pi^S(\Hilb_X^n,\widetilde{nx})\,,$$
where $\pi^S(X,x)_{\rm ab}$ is the abelianization of $\pi^S(X,x)$. 

\subsection{A group scheme theoretic lemma}
We need the following group scheme theoretic result for later use. 
First recall the definition of the derived subgroup $\mathscr DG$
as given in \cite[\S 10.1]{Wa}. It is a closed normal subgroup. 
It follows from the main Theorem in \cite[\S16.3]{Wa} that there is a 
quotient $\alpha:G\to G_{\rm ab}$ whose kernel is precisely $\mathscr DG$.
It is clear that $G_{\rm ab}$ is an abelian affine group scheme. 
\begin{lemma}\label{group-lemma}
	Let $G$ and $H$ be two group schemes over $k$. For an integer $n \geq 2$, we denote by $G^n$ the 
	group scheme $G \times \cdots \times G$ (= the $n$-fold product of $G$ with itself). 
	Then $S_n$ acts on $G^n$ by permuting the factors. 
	Let $f_0$ be the following composite group homomorphism 
	$$f_0 : G^n \stackrel{\alpha^n}{\longrightarrow} (G_{\rm ab})^n \stackrel{m}{\longrightarrow} G_{\rm ab}\,,$$ 
	where $m$ denotes the 
	multiplication homomorphism. 
	Then a homomorphism of $k$-group schemes $f : G^n \longrightarrow H$ is $S_n$-invariant if and only if there 
	is a homomorphism $\tilde f : G_{\rm ab} \longrightarrow H$ of affine $k$-group schemes such that 
	$\tilde f \circ f_0 = f$. In other words, the following diagram commutes. 
	\[
	\xymatrix{
		G^{n}\ar[rr]^f\ar[rd]_{f_0} && H\\
		& G_{\rm ab}\ar[ru]_{\tilde f} & 
	}
	\]
\end{lemma}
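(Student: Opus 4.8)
The plan is to prove both implications directly. The ``if'' direction is immediate: if $f = \tilde f \circ f_0$ for some homomorphism $\tilde f : G_{\rm ab} \to H$, then since $f_0$ is built from $\alpha^n$ followed by the multiplication map $m : (G_{\rm ab})^n \to G_{\rm ab}$, and both $\alpha^n$ and $m$ are visibly $S_n$-equivariant when $H$ (resp.\ the target $G_{\rm ab}$) carries the trivial $S_n$-action --- the multiplication map on an abelian group scheme is symmetric --- it follows that $f_0$ is $S_n$-invariant, hence so is $f = \tilde f \circ f_0$.

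For the ``only if'' direction, suppose $f : G^n \to H$ is $S_n$-invariant. The key observation is that the group $\mathscr D(G^n)$ is generated (as a closed subgroup, in the Hopf-algebra sense of \cite[\S10.1]{Wa}) by the commutators, and I claim $\mathscr D(G^n) = \ker(f_0)$. Indeed, $f_0$ factors through $(G_{\rm ab})^n$, which is abelian, so $\mathscr D(G^n) \subseteq \ker(\alpha^n) = (\mathscr D G)^n \subseteq \ker(f_0)$; conversely $\ker(f_0)/(\mathscr D G)^n$ is the kernel of $m : (G_{\rm ab})^n \to G_{\rm ab}$, which is the ``sum-zero'' subgroup and is generated by elements of the form $(e,\dots, g^{-1},\dots,g,\dots,e)$, each of which, under any $S_n$-invariant homomorphism out of $G^n$ lifting it, maps to $e$ --- more cleanly, one shows directly that $f$ kills these generators. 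Concretely: pick the transposition $\tau = (1\,2)$ and, inside $G^n$, consider for a test algebra $R$ and $g \in G(R)$ the element obtained by comparing $f$ of $(g,e,\dots)$ against $f$ of its $\tau$-conjugate; $S_n$-invariance forces these to agree, and running over all transpositions and using that $G_{\rm ab} = G^n / \mathscr D(G^n)$ (by \cite[\S16.3]{Wa} applied to $G$, together with the identification of $\mathscr D(G^n)$ above) shows $f$ factors through the quotient map $G^n \to G^n/\mathscr D(G^n) = G_{\rm ab}$.

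The cleanest route, which I would actually write, is to identify $f_0$ as the quotient $G^n \to (G^n)_{\rm ab}$ up to the canonical isomorphism $(G^n)_{\rm ab} \cong G_{\rm ab}$: granting $\mathscr D(G^n) = (\mathscr D G)^n \cdot \ker(m\circ \alpha^n \text{ mod } (\mathscr D G)^n)$ and that the latter equals $\ker(f_0)$, the universal property of the abelianization from \cite[\S16.3]{Wa} gives, for \emph{any} homomorphism $G^n \to H$ that kills $\mathscr D(G^n)$, a unique factorization through $f_0$. So it remains only to check that an $S_n$-invariant $f$ kills $\mathscr D(G^n)$, equivalently kills $\ker(f_0)$; since $\ker(f_0) \supseteq \mathscr D(G^n)$ automatically once we know $\ker(f_0)$ is normal with abelian quotient $G_{\rm ab}$, what needs $S_n$-invariance is the reverse containment $\ker(f_0)\subseteq \mathscr D(G^n)$, i.e.\ that the ``antidiagonal'' elements $(g, g^{-1}, e, \dots, e)$ lie in the derived subgroup --- and this is where I invoke that $f$ is $S_n$-invariant: $f(g,g^{-1},e,\dots) = f\big(\tau\cdot(g,g^{-1},e,\dots)\big)^{-1}\cdot f(g,g^{-1},e,\dots)\cdot(\cdots)$, a manipulation forcing $f$ to be trivial on such elements, hence $f$ descends along $f_0$. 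The main obstacle is the purely group-scheme-theoretic bookkeeping: making precise, in the Hopf-algebra language of Waterhouse, that $\mathscr D(G^n) = (\mathscr D G)^n$ and that the antidiagonal copies of $G$ sit inside it --- over a field this is standard but must be stated carefully since $G$ need not be smooth or connected.
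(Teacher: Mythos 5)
Your ``if'' direction is fine, and the raw ingredients for the ``only if'' direction (the identity $f(g,e,\dots)=f(e,g,\dots)$ and the antidiagonal points $(g,g^{-1},e,\dots,e)$) are the right ones, but the claim you organize the proof around is false. You assert $\mathscr{D}(G^n)=\ker(f_0)$, i.e.\ that $f_0$ is (up to canonical isomorphism) the abelianization $G^n\to (G^n)_{\rm ab}$. In fact $(G^n)_{\rm ab}\cong (G_{\rm ab})^n$, not $G_{\rm ab}$: already for $G$ abelian and nontrivial, $\mathscr{D}(G^n)$ is trivial while $\ker(f_0)=\ker(m)\cong G^{\,n-1}$ is not. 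Consequently the universal property of \cite[\S 16.3]{Wa} applied to $G^n$ can only produce a factorization through $(G_{\rm ab})^n$, never through $G_{\rm ab}$. The sentence ``what needs $S_n$-invariance is the reverse containment $\ker(f_0)\subseteq\mathscr{D}(G^n)$'' is also incoherent: that containment is a statement about $G$ alone, independent of any $f$, and it is false in general; what $S_n$-invariance must (and does) give is $\ker(f_0)\subseteq\ker(f)$.

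The two steps where invariance genuinely enters are exactly the two computations in the paper's proof, and your proposal gestures at them without carrying either out. (a) An $S_n$-invariant $f$ kills the commutators of each factor: on $R$-points, $f(gh,e,\dots)=f(g,e,\dots)f(h,e,\dots)=f(g,e,\dots)f(e,h,\dots)=f(g,h,e,\dots)=f(e,h,\dots)f(g,e,\dots)=f(hg,e,\dots)$, whence $f$ factors through $\alpha^n:G^n\to (G_{\rm ab})^n$; this is the chain of identities the paper writes scheme-theoretically with $m_G,\sigma,e_G$, and it is precisely the step your kernel comparison was meant to replace but does not. (b) The induced map on $(G_{\rm ab})^n$ kills $\ker(m)$: one has $f(g,g^{-1},e,\dots)=f(g,e,\dots)f(e,g^{-1},e,\dots)=f(g,e,\dots)f(g^{-1},e,\dots)=e_H$ (your displayed manipulation for this is garbled), and every $R$-point of $\ker(m)$ is a product of such antidiagonal points since the group is abelian, so the induced map factors through $(G_{\rm ab})^n/\ker(m)\cong G_{\rm ab}$ by the quotient theory of \cite[\S 16.3]{Wa}. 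Repaired along these lines your argument becomes essentially the paper's proof (which does $n=2$ and leaves general $n$ to the reader); as written, the central identification is wrong and step (a) is missing.
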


\begin{proof}
	For any $k$-group scheme $G$, we denote by 
	\begin{itemize}
		\item $m_G : G \times G \longrightarrow G$ the multiplication morphism of $G$, 
		\item $i_G : G \longrightarrow G$ the inversion morphism of $G$, and 
		\item $e_G \in G(k)$ the identity of $G$. 
	\end{itemize}
	We sketch the proof for $n = 2$; the general case is similar and left to the reader as an exercise. 
	We have a homomorphism $f : G \times G \longrightarrow H$ such that $f \circ \sigma = f$, where 
	$\sigma : G \times G \to G \times G$ is the homomorphism switching the factors. 
	Let $p_1, p_2 : G \times G \to G$ denote the projections onto the first and second factors, respectively. 
	Then one can easily check that 
	\begin{align*}
	f\circ (m_G,e_G)&=f\circ m_{G\times G}\circ ((p_1,e),(p_2,e))\\
	&=m_H\circ (f\circ(p_1,e),f\circ (p_2,e))\\
	&=m_H\circ (f\circ(p_1,e),f\circ (e,p_2))\\
	&=f\circ (p_1,p_2)\\
	&=f\circ(m_G\circ \sigma,e_G)\,.
	\end{align*}
	Using this it easily follows that 
	\begin{equation}
	f\circ m_{G\times G}((m_G,e_G),(i_G\circ m_G\circ\sigma,e_G))=e_H\,.
	\end{equation}
	Now one easily concludes that $f$ factors through 
	the map $G\times G\to G_{\rm ab}\times G_{\rm ab}$.
	Let $\Delta':G\to G\times G$ denote the map $g\mapsto (g,g^{-1})$.
	Then one checks easily that $f\circ \Delta'=e_H$. From these the 
	lemma follows. 
\end{proof}

A vector bundle $E$ on $X^n$ is said to be {\it $S_n$-invariant} if $\sigma^*E \cong E$, 
for all $\sigma \in S_n \subseteq {\rm Aut}(X^n)$. 

\begin{corollary}\label{S_n-equivariance-str}
	Any vector bundle in the category ${\rm Vect}_0^s(X^n)$, associated to a representation of 
	$\pi^S(X, x)^n$ which factors through $\pi^S(X, x)_{\rm ab}$ (see the statement of Lemma 
	\ref{group-lemma}), is $S_n$-invariant. 
\end{corollary}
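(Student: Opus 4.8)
The plan is to use the Tannakian dictionary to translate the claimed statement about $S_n$-invariance of vector bundles into a purely group-theoretic statement, which is then supplied by Lemma \ref{group-lemma}. Recall that $S$-fundamental group schemes are compatible with products: there is a natural isomorphism $\pi^S(X^n, \widetilde{nx}) \cong \pi^S(X,x)^n$ (this is a Künneth-type statement for $\mathrm{Vect}_0^s$; I would cite the relevant result of Langer, or the general Tannakian fact that the category of numerically flat bundles on a product is generated by external tensor products). Via this identification, $\mathrm{Vect}_0^s(X^n) \simeq \Rep_k(\pi^S(X,x)^n)$. A bundle $E$ in $\mathrm{Vect}_0^s(X^n)$ coming from a representation $\rho$ of $\pi^S(X,x)^n$ that factors as $\rho = \bar\rho \circ f_0$ with $f_0 : \pi^S(X,x)^n \to \pi^S(X,x)_{\rm ab}$ as in Lemma \ref{group-lemma} therefore corresponds to an $S_n$-invariant representation, because $f_0$ is $S_n$-invariant by construction: indeed $f_0 = m \circ \alpha^n$, and both $\alpha^n$ (applied factor-wise) and the multiplication $m$ on the abelian group scheme $\pi^S(X,x)_{\rm ab}$ are unchanged under permuting the $n$ factors.

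The key steps, in order, would be: first, establish the identification $\pi^S(X^n, \widetilde{nx}) \cong \pi^S(X,x)^n$, compatibly with the $S_n$-actions on both sides (permutation of the $n$ copies of $X$ on the left induces permutation of the $n$ factors of the group scheme on the right). Second, observe that under the Tannakian equivalence $\mathrm{Vect}_0^s(X^n) \simeq \Rep_k(\pi^S(X,x)^n)$, the functor $\sigma^*$ for $\sigma \in S_n$ corresponds to precomposition of a representation with the automorphism of $\pi^S(X,x)^n$ permuting the factors by $\sigma$. Third, note that for any $\sigma \in S_n$ we have $f_0 \circ \sigma = f_0$ as homomorphisms $\pi^S(X,x)^n \to \pi^S(X,x)_{\rm ab}$: this is immediate since $m$ is symmetric (being the multiplication of a commutative group scheme) and $\alpha^n$ commutes with the permutation. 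Fourth, conclude that if $E$ corresponds to $\bar\rho \circ f_0$, then $\sigma^* E$ corresponds to $\bar\rho \circ f_0 \circ \sigma = \bar\rho \circ f_0$, hence $\sigma^* E \cong E$ for all $\sigma$, which is exactly the definition of $S_n$-invariance.

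The main obstacle I anticipate is the first step: one must be careful that the Künneth isomorphism $\pi^S(X^n) \cong \pi^S(X)^n$ holds in this generality (smooth projective varieties over an algebraically closed field of characteristic $p>3$) and that it is genuinely $S_n$-equivariant in the sense needed. The $S_n$-equivariance is essentially formal once the isomorphism is constructed via the canonical external-product / projection maps $X^n \to X$, since these are permuted by $S_n$; the substantive content is the existence of the product decomposition of the $S$-fundamental group scheme itself. I would invoke the appropriate statement from Langer's work (the analogue for $\pi^S$ of Nori's result that $\pi^N(X \times Y) \cong \pi^N(X) \times \pi^N(Y)$ for proper reduced connected schemes) and otherwise treat the remaining steps as routine applications of the Tannakian formalism together with Lemma \ref{group-lemma}, whose group-theoretic content is exactly what makes the factorization through $\pi^S(X,x)_{\rm ab}$ equivalent to $S_n$-invariance.
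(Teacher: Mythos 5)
Your argument is correct and is exactly the reasoning the paper intends: the corollary is stated without proof, but the implicit justification is precisely the product decomposition $\pi^S(X^n,(x,\ldots,x))\cong\pi^S(X,x)^n$ from Langer (\cite[Theorem 4.1]{La2}, which the paper invokes in the next subsection), the Tannakian translation of $\sigma^*$ into precomposition with the permutation automorphism, and the $S_n$-symmetry of $f_0=m\circ\alpha^n$ from Lemma \ref{group-lemma}. No gaps; your care about the $S_n$-equivariance of the K\"unneth isomorphism and the fixed base point $(x,\ldots,x)$ is exactly the right level of caution.
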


\subsection{A functor between Tannakian categories}
Given a numerically flat vector bundle $E$ on $\Hilb_X^n$, we want to 
associate to it a numerically flat vector bundle $\mathcal G$ on $X^n$. 
We first associate to $E$ a reflexive sheaf 
$\mathcal G$ on $X^n$ and then use the criterion in \cite[Theorem 2.2]{La2}
to show that $\mathcal G$ is numerically flat. 
We recall the criterion here for the benefit of the reader. 
\begin{theorem-nonumber}\cite[Theorem 2.2]{La2}
Let $X$ be a smooth projective $k$-variety of dimension $d$. Let $H$
be an ample divisor on $X$ and let $E$ be a coherent sheaf on $X$.
Then $E$ is numerically flat if and only if 
$E$ is a strongly $H$-semistable reflexive sheaf
with ${\rm ch}_1(E)\cdot H^{d-1}={\rm ch}_2(E)\cdot H^{d-2}=0$.
\end{theorem-nonumber}

Recall that $W \subset S^n(X)$ is the open subset consisting of points of type 
$\langle 1,1,1,\ldots,1\rangle$ and $\langle 2,1,1,\ldots,1\rangle$, and
$V$ is the open subset $\varphi^{-1}(W) \subset \Hilb_X^n$, where $\varphi$ is the Hilbert-Chow 
morphism.

\begin{proposition}\label{pushforward-locally-free-large-open} 
Let $E$ be a numerically flat vector bundle of rank $r$ on $\Hilb_X^n$. Then $\varphi_*(E\vert_V)$ is a 
locally free coherent sheaf on $W$. Moreover, the natural map 
\begin{equation}\label{eqn-8}
	\varphi^*\varphi_*(E\vert_V) \longrightarrow E\vert_V 
\end{equation}
is an isomorphism.
\end{proposition}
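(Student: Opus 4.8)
The plan is to work locally on $W$. Over the open set $W_0\subset W$ of points of type $\langle 1,1,\ldots,1\rangle$ the morphism $\varphi$ is an isomorphism, so there $\varphi_*(E\vert_V)$ is locally free and \eqref{eqn-8} is an isomorphism. It therefore suffices to fix a point $q\in W$ of type $\langle 2,1,\ldots,1\rangle$ and prove that $\varphi_*(E\vert_V)$ is free of rank $r$ near $q$ and that \eqref{eqn-8} is an isomorphism on a neighbourhood of $C:=\varphi^{-1}(q)$. For this I would first record the local geometry at $q$: by \eqref{formal-nbd} and \cite[Lemma 4.4]{Fo}, in the formal neighbourhood of $q$ the morphism $\varphi$ is the product of the minimal resolution $\pi:\tilde Z\to Z$ of the $A_1$-surface singularity $Z=\{uw=v^2\}$ (the blow-up of its cone point, whose exceptional locus is a $(-2)$-curve $C_0\cong\mathbb{P}^1$) with the identity of a smooth factor $D$ of dimension $2n-2$. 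Hence $C=\varphi^{-1}(q)=C_0\times\{\mathrm{pt}\}$, which by Proposition \ref{prop-1} is a reduced $\mathbb{P}^1_k$; the immersion $C\hookrightarrow V$ is regular with conormal bundle
\[
N^{\vee}_{C/V}\;\cong\;\mathcal{O}_{\mathbb{P}^1}(2)\oplus\mathcal{O}_{\mathbb{P}^1}^{\oplus(2n-2)}\,,
\]
and $\m_q\mathcal{O}_V=\mathcal{I}_C$, so that $V_m:=V\times_W\Spec(\mathcal{O}_{W,q}/\m_q^{m+1})$ is the $m$-th infinitesimal neighbourhood of $C$ in $V$, with successive graded pieces $\mathcal{I}_C^m/\mathcal{I}_C^{m+1}\cong\Sym^m N^{\vee}_{C/V}$ --- a direct sum of line bundles of non-negative degree on $\mathbb{P}^1$. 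The point of this is that $H^1(\mathbb{P}^1,\Sym^m N^{\vee}_{C/V}\otimes\mathcal{O}_{\mathbb{P}^1}^{\oplus s})=0$ for all $m,s\ge 0$.

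Next I would use numerical flatness to pin down $E\vert_C$. The composition $C\hookrightarrow V\hookrightarrow\Hilb_X^n$ is a morphism from a smooth projective curve, so $E\vert_C$ is semistable of degree $0$ on $\mathbb{P}^1_k$; writing $E\vert_C\cong\bigoplus_{i=1}^r\mathcal{O}_{\mathbb{P}^1}(a_i)$, semistability forces $a_i\le 0$ while $\sum_i a_i=0$, so $E\vert_C\cong\mathcal{O}_C^{\oplus r}$. Combining this triviality with the vanishing noted above, an induction on $m$ along the short exact sequences
\[
0\longrightarrow\Sym^m N^{\vee}_{C/V}\otimes E\vert_C\longrightarrow E\vert_{V_m}\longrightarrow E\vert_{V_{m-1}}\longrightarrow 0
\]
(exact because $E$ is locally free) --- carried out first for $E=\mathcal{O}_V$ and then in general --- shows $H^0(V_m,E\vert_{V_m})\cong(\mathcal{O}_{W,q}/\m_q^{m+1})^{\oplus r}$ and $H^1(V_m,E\vert_{V_m})=0$ for all $m\ge0$; one may, if one wishes, first lift the trivialisation of $E\vert_C$ to each $V_m$, which is possible for exactly the same cohomological reason. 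By Grothendieck's theorem on formal functions this gives $(R^1\varphi_*E)^{\wedge}_q=0$ and $(\varphi_*(E\vert_V))^{\wedge}_q\cong\widehat{\mathcal{O}}_{W,q}^{\oplus r}$; by faithful flatness of completion, $\varphi_*(E\vert_V)$ is then free of rank $r$ near $q$. Together with the $W_0$ case this proves the first assertion.

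For the second assertion, \eqref{eqn-8} is an isomorphism over $\varphi^{-1}(W_0)$, and near $C$ both $\varphi^*\varphi_*(E\vert_V)$ and $E\vert_V$ are free of rank $r$. A free $\mathcal{O}_{W,q}$-basis of $(\varphi_*(E\vert_V))_q$ restricts on $C=V_0$ to a basis of $H^0(C,E\vert_C)$ --- this is precisely the base-change identification falling out of the formal-functions computation --- so \eqref{eqn-8} restricts to an isomorphism of vector bundles on $C$. A homomorphism of vector bundles of equal rank that is an isomorphism along the proper fibre $C=\varphi^{-1}(q)$ is an isomorphism over a $\varphi$-saturated open neighbourhood of $C$ (the locus of non-isomorphism is closed, $\varphi$ is closed, and the image of that locus misses $q$); as $q$ runs over the type $\langle2,1,\ldots,1\rangle$ stratum, these neighbourhoods together with $\varphi^{-1}(W_0)$ cover $V$, so \eqref{eqn-8} is an isomorphism.

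The step I expect to be the main obstacle is the one just sketched with formal functions: $\varphi$ is not flat (the fibre dimension jumps from $0$ to $1$ along the type $\langle2,1,\ldots,1\rangle$ stratum), so one cannot invoke cohomology-and-base-change directly, and the computation above has to be done by hand. It rests essentially on the explicit description of that stratum supplied by Proposition \ref{prop-1} --- that the exceptional fibres are reduced copies of $\mathbb{P}^1$ with normal bundle $\mathcal{O}(-2)\oplus\mathcal{O}^{\oplus(2n-2)}$ --- and this is also where the hypothesis ${\rm char}(k)\neq2$ is used, through the computation \eqref{formal-nbd} of the completed local ring.
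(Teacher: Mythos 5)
Your proposal is correct and takes essentially the same route as the paper's proof: reducedness of the fibre from Proposition \ref{prop-1}, triviality of $E$ on the exceptional $\mathbb{P}^1$ via numerical flatness, vanishing of $H^1$ on the infinitesimal neighbourhoods because the graded pieces of the ideal are sums of line bundles of non-negative degree, the theorem on formal functions, and properness of $\varphi$ to spread the conclusion to a saturated neighbourhood. The only cosmetic differences are that the paper obtains the non-negativity from the surjection $(\m_q^m/\m_q^{m+1})\otimes\mathcal{O}_V\twoheadrightarrow\mathcal{I}^m/\mathcal{I}^{m+1}$ instead of identifying the conormal bundle explicitly, and it concludes by lifting a basis of $H^0(Y_1,E\vert_{Y_1})$ to sections of $\varphi_*E$ near $q$ and invoking properness together with normality of $S^n(X)$ and birationality of $\varphi$, rather than your direct computation of the completed stalk.
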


\begin{proof}
Let $q\in W$ be a point of type $\langle 2,1,1,\ldots,1\rangle$. 
Let $\mathcal{I} \subset \mathcal{O}_V$ denote the reduced sheaf of ideals 
of the closed subscheme $\varphi^{-1}(q)$. Let $\mathscr{I}_q$ 
be the ideal sheaf of the closed point $q \in W$. 
For each integer $n \geq 1$, let $\mathscr{I}_q^n$ be 
the ideal sheaf of the $n$-th order thickening of $q$ in $W$. 
By Proposition \ref{prop-1} we have 
$$\mc I=\mathscr{I}_q\mc O_V.$$ 
For each integer $n \geq 1$, let $Y_n$ denote the closed subscheme 
of $V$ corresponding to the sheaf of ideals $\mathcal{I}^n$. 
Since $E$ is numerically flat and $Y_1 \cong \mathbb{P}^1_k$ (see Proposition \ref{prop-1}), it follows that 
the restriction of $E$ to $Y_1$ is trivial. 

Consider the following short exact sequence of sheaves on $V$
\begin{equation}\label{eqn-4}
	0\longrightarrow \mc I\otimes E\longrightarrow E\longrightarrow E\vert_{Y_1}\longrightarrow 0. 
\end{equation}
Applying $\varphi_*$ to it we get the following exact sequence of sheaves on $W$. 
\begin{equation}\label{eqn-5}
	\varphi_*(E)\longrightarrow H^0(Y_1,E\vert_{Y_1})\longrightarrow R^1\varphi_*(\mc I\otimes E). 
\end{equation}

We claim that the completion of $R^1\varphi_*(\mc I\otimes E)$ at the maximal ideal $\m_q$ of $q$ is $0$. 
By the Theorem on Formal Functions (see \cite[Chapter III, Theorem 11.1]{Ha}), we have 
\begin{equation}\label{eqn-6}
	(R^1\varphi_*(\mc I\otimes E))\ \widehat{\,}\ \cong \lim_{\longleftarrow}H^1(Y_n,\mc I\otimes E\otimes \mc O_V/\mc I^n). 
\end{equation}
We will prove by induction on $n$ that $H^1(Y_n,\mc I\otimes E\otimes \mc O_V/\mc I^n)=0$.
Since $\mc I = \mathscr{I}_q \mathcal{O}_V$, it follows that there is a surjection 
$$(\m_q^n/\m_q^{n+1})\otimes_{\mc O_{W,q}}\mc O_{V}\cong 
	\mathscr{I}^n_q/\mathscr{I}^{n+1}_q\otimes_{\mc O_W}\mc O_V\twoheadrightarrow \mc I^n/\mc I^{n+1},$$ 
where $\mc O_{W, q}$ is the stalk of $\mc O_W$ at $q$. 
The locally free sheaf $\mc I^n/\mc I^{n+1}$ on $Y_1 \cong \mathbb P^1$ is a 
direct sum of line bundles. It follows that each of these line bundle has
degree $\geq 0$. 
For $n = 1$, the base case of induction, we have 
\[
H^1(Y_1,\mc I\otimes E\otimes \mc O_V/\mc I)=H^1(Y_1,\mc I/\mc I^2\otimes E_1)=0\,.
\]
Assume that we have proved the assertion for $n$. Then the assertion for $n+1$ follows from the 
long exact cohomology sequence attached to the short exact sequence of sheaves on $Y_{n+1}$
\[
0 \longrightarrow (\mc I^{n+1}/\mc I^{n+2})\otimes E \longrightarrow (\mc I/\mc I^{n+2})\otimes E 
	\longrightarrow (\mc I/\mc I^{n+1})\otimes E \longrightarrow 0\,.
\]
This proves the claim that $R^1\varphi_*(\mc I\otimes E)$ at the maximal ideal $\m_q$ of $q$ is $0$. 

This proves that the natural map 
\begin{equation}\label{eqn-7}
	\varphi_*(E) \longrightarrow H^0(Y_1,E\vert_{Y_1}) 
\end{equation}
in \eqref{eqn-5} is surjective in a neighborhood around $q$. Let $s_1,s_2\ldots,s_r$ 
be a basis for $H^0(Y_1,E\vert_{Y_1})$. Let $\Spec(A)$ be an affine neighborhood of $q$
on which the map in \eqref{eqn-7} is surjective. Choosing lifts 
$\tilde s_i\in \Gamma(\Spec(A), \varphi_*(E))$ of $s_i$, we get a homomorphism 
\begin{equation}
	\mc O_V^{\oplus r} \longrightarrow E 
\end{equation}
over $\varphi^{-1}(\Spec(A))$, which is a surjection over the fiber $Y_1$. Since $\varphi$ is proper,
it follows that there is a smaller affine neighborhood $W_0$ of $q$ over 
which there is an isomorphism $\mc O_{V_0}^{\oplus r}\stackrel{\sim}{\to} E$, where 
$V_0=\varphi^{-1}(W_0)$. Applying $\varphi_*$, using normality 
of $S^n(X)$ and that $\varphi$ is birational, the Proposition follows. 
\end{proof}

\begin{corollary}\label{compatibility-Frobenius-pullback}
Let $F$ denote the absolute Frobenius morphism. With the above notations, we have an isomorphism 
$F^*\varphi_*(E\vert_V) \stackrel{\sim}{\longrightarrow} \varphi_*(F^*E\vert_V)$.
\end{corollary}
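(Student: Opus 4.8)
The plan is to reduce everything to the isomorphism \eqref{eqn-8} of Proposition \ref{pushforward-locally-free-large-open} together with the trivial compatibility of pullback with Frobenius. First I would observe that since $F$ is a morphism of schemes over $k$ (indeed a morphism of $W$ and of $\Hilb_X^n$ covering itself), there is a cartesian-type relation: the absolute Frobenius $F_{\Hilb}$ on $\Hilb_X^n$ restricts to $V$ (as $V$ is open and $F$ is the identity on the underlying topological space), and $\varphi \circ F_V = F_W \circ \varphi$ since $\varphi$ is a $k$-morphism and Frobenius is natural. Hence there is always a base-change map $F_W^*\varphi_*(E\vert_V) \to \varphi_*(F_V^*(E\vert_V)) = \varphi_*(F^*E\vert_V)$, and it suffices to check it is an isomorphism.

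To see this, I would apply Proposition \ref{pushforward-locally-free-large-open} to the bundle $F^*E$ in place of $E$. This requires knowing that $F^*E$ is again numerically flat on $\Hilb_X^n$; but this is immediate from the definition of $\mathrm{Vect}_0^s$ (strong $H$-semistability is preserved by Frobenius pullback, and the Chern character conditions ${\rm ch}_1\cdot H^{d-1}={\rm ch}_2\cdot H^{d-2}=0$ are preserved since all Chern classes of a numerically flat bundle vanish), together with \cite[Theorem 2.2]{La2}. Therefore $\varphi_*(F^*E\vert_V)$ is locally free on $W$ and the natural map $\varphi^*\varphi_*(F^*E\vert_V)\to F^*E\vert_V$ is an isomorphism. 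Meanwhile, pulling back the isomorphism \eqref{eqn-8} along $F_V$ gives $F_V^*\varphi^*\varphi_*(E\vert_V)\stackrel{\sim}{\to} F_V^*(E\vert_V)=F^*E\vert_V$, and $F_V^*\varphi^* = \varphi^* F_W^*$ by the commutativity above, so $\varphi^*\big(F_W^*\varphi_*(E\vert_V)\big)\stackrel{\sim}{\to} F^*E\vert_V$.

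Now both $F_W^*\varphi_*(E\vert_V)$ and $\varphi_*(F^*E\vert_V)$ are locally free sheaves on $W$ whose pullbacks along $\varphi$ are canonically identified with $F^*E\vert_V$, compatibly with the base-change map. Since $\varphi:V\to W$ is birational and proper and $W$ (being an open subset of the normal variety $S^n(X)$) is normal, the adjunction $\varphi^*\dashv\varphi_*$ together with $\varphi_*\mathcal O_V=\mathcal O_W$ shows that a locally free sheaf on $W$ is recovered as $\varphi_*$ of its pullback; applying $\varphi_*$ to the chain of isomorphisms above and using the projection formula (or simply $\varphi_*\varphi^* \cong \mathrm{id}$ on locally free sheaves, which is exactly the content invoked at the end of the proof of Proposition \ref{pushforward-locally-free-large-open}) yields the desired isomorphism $F^*\varphi_*(E\vert_V)\stackrel{\sim}{\to}\varphi_*(F^*E\vert_V)$, and one checks it agrees with the base-change map. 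The only mildly delicate point — the main thing to get right rather than a genuine obstacle — is bookkeeping the identification $F_V^*\varphi^*=\varphi^*F_W^*$ and verifying that the two descriptions of the sheaf on $W$ are identified by the canonical base-change morphism rather than merely being abstractly isomorphic; everything else is a formal consequence of Proposition \ref{pushforward-locally-free-large-open} applied twice.
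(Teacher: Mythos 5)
Your proposal is correct, but it finishes the argument by a different mechanism than the paper. Both proofs start identically: $F^*E$ is again numerically flat, so Proposition \ref{pushforward-locally-free-large-open} applies to it and both $F^*\varphi_*(E\vert_V)$ and $\varphi_*(F^*E\vert_V)$ are locally free of the same rank on $W$. From there the paper argues pointwise: it reduces to showing the base-change map is \emph{surjective} (a surjection of locally free sheaves of equal rank being an isomorphism), notes this is clear over the smooth locus, and at a point $q$ of type $\langle 2,1,\ldots,1\rangle$ identifies the two fibers with $H^0(Y_1,E_1)$ and $H^0(Y_1,F^*E_1)$ (using the reducedness of the fiber from Proposition \ref{prop-1} and the computation in the proof of Proposition \ref{pushforward-locally-free-large-open}), where the map is $F^*$ on sections of a trivial bundle on $Y_1\cong\mathbb{P}^1_k$, hence surjective. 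You instead use the full strength of \eqref{eqn-8} for both $E$ and $F^*E$: pulling \eqref{eqn-8} for $E$ back along $F_V$ and comparing with \eqref{eqn-8} for $F^*E$ shows that $\varphi^*$ of the base-change map is an isomorphism, and then $\varphi_*\mathcal{O}_V=\mathcal{O}_W$ (normality plus birationality, exactly the ingredient invoked at the end of the proof of Proposition \ref{pushforward-locally-free-large-open}) together with the projection formula and naturality of the unit $\mathrm{id}\to\varphi_*\varphi^*$ descends this to $W$. The compatibility you flag is genuinely the only point to check, and it holds: the base-change morphism is by definition adjoint to $F_V^*$ applied to the counit, so the triangle you need commutes by the unit--counit identities. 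Net comparison: your route is purely formal, needs no further fiber computation, and would work verbatim for any proper birational morphism to a normal variety once the analogue of Proposition \ref{pushforward-locally-free-large-open} is known; the paper's route uses less of Proposition \ref{pushforward-locally-free-large-open} (only local freeness plus the explicit fiber description) at the cost of an explicit surjectivity check at the bad points.
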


\begin{proof}
Since $F^*E$ is numerically flat, it follows that both these sheaves are locally free 
of the same rank. It suffices to show that the natural map 
\begin{equation}\label{Frob}
	F^*\varphi_*(E\vert_V) \longrightarrow \varphi_*(F^*E\vert_V) 
\end{equation}
is surjective. This is clear over the smooth locus of $S^n(X)$ since $F$ is faithfully
flat over the smooth locus. Let $q \in W$ be a point of type $\langle 2, 1, 1 \ldots, 1 \rangle$. 
It follows from Proposition \ref{prop-1} that the restriction of $F^*\varphi_*(E\vert_V)$ to $q$ 
is naturally isomorphic to $H^0(Y_1, E_1)$ and the restriction of $\varphi_*(F^*E\vert_V)$ 
at $q$ is naturally isomorphic to $H^0(Y_1, F^*E_1)$. The restriction to $q$ of the natural homomorphism 
in \eqref{Frob} is the map 
$$F^* : H^0(Y_1, E_1) \longrightarrow H^0(Y_1, F^*E_1)\,,$$ 
which is a surjection. From this the Corollary follows. 
\end{proof}

Recall the quotient map $\psi : X^n \longrightarrow S^n(X)$ defined in \eqref{chow-map}.  
Let $j : \psi^{-1}(W) \hookrightarrow X^n$ denote the inclusion. Recall that the category 
$\mc C^{\rm nf}(X)$ is defined in Definition \ref{cat-nf}. 

\begin{proposition}\label{main-correspondence}
	If $E$ is an object of $\mc C^{\rm nf}(\Hilb_X^n)$, then 
	$$\mathscr G(E) := (j_*\psi^*\varphi_*(E\vert_V))^{\dv}$$ 
	is an object of $\mc C^{\rm nf}(X^n)$. 
\end{proposition}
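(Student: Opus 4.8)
The plan is to use the criterion recalled just before the statement, namely \cite[Theorem 2.2]{La2}: it suffices to check that $\mathscr G(E) := (j_*\psi^*\varphi_*(E\vert_V))^{\dv}$ is a strongly $H$-semistable reflexive sheaf on $X^n$ with $\mathrm{ch}_1\cdot H^{2n-1} = \mathrm{ch}_2\cdot H^{2n-2} = 0$, since by \cite[Proposition 4.1]{La} these conditions already force $\mathscr G(E)$ to be locally free, and one then checks that the pullback to an arbitrary smooth projective curve is semistable of degree $0$ to conclude it lies in $\mc C^{\rm nf}(X^n)$. Reflexivity is immediate: any sheaf of the form $\mathcal F^{\dv}$ is reflexive. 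The first real step is to identify $\mathscr G(E)$ on a large open set. By Proposition \ref{pushforward-locally-free-large-open}, $\varphi_*(E\vert_V)$ is a locally free coherent sheaf of rank $r$ on $W$, and $\psi^{-1}(W)$ is $S_n$-stable; hence $\psi^*\varphi_*(E\vert_V)$ is locally free of rank $r$ on the open set $\psi^{-1}(W) \subset X^n$. Since $\Hilb_X^n \setminus V$ has codimension $2$ in $\Hilb_X^n$ (computed in \S3.3 via Lemma \ref{strata-dimension}), and $\varphi$, $\psi$ are generically finite, $X^n \setminus \psi^{-1}(W)$ has codimension $\geq 2$ in $X^n$. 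Therefore $\mathscr G(E)$ is the (unique) reflexive extension across a closed set of codimension $\geq 2$ of a locally free sheaf; in particular $\mathscr G(E)\vert_{\psi^{-1}(W)} \cong \psi^*\varphi_*(E\vert_V)$ is locally free of rank $r$, and $j_*$ already computes the reflexive hull so the outer $(\ ){}^{\dv}$ is harmless but safe.

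Next I would establish the Chern-class vanishing. Because $\mathscr G(E)$ agrees with a locally free sheaf outside codimension $2$, its Chern classes $c_1$ and $c_2$ are determined by restriction to $\psi^{-1}(W)$ up to classes supported in codimension $\geq 2$; but $c_1$ lives in codimension $1$, so $c_1(\mathscr G(E)) = 0$ will follow from $c_1(\psi^*\varphi_*(E\vert_V)) = 0$, which in turn follows from $c_1(\varphi_*(E\vert_V)) = 0$ on $W$. For the latter: $\varphi$ is birational and an isomorphism over the open dense $S^n(X)_{\rm sm}\cap W$ of type $\langle 1,\dots,1\rangle$, over which $\varphi_*(E\vert_V)$ is identified with the pushforward of $E$ along an isomorphism onto $S^n(X)_{\rm sm}$, whose own pullback to $X^n$ agrees with a piece of the bundle coming from $E$; since all Chern classes of $E$ vanish (as $E \in \mc C^{\rm nf}(\Hilb_X^n)$, which equals $\mathrm{Vect}_0^s(\Hilb_X^n)$ on a smooth variety), the restriction of $\mathrm{ch}_1, \mathrm{ch}_2$ of $\mathscr G(E)$ to the complement of the codimension-$2$ locus vanish, and intersecting with $H^{2n-1}$ resp. $H^{2n-2}$ kills the correction terms. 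The cleanest way to package this is: for any ample $H$ on $X^n$, $\mathrm{ch}_i(\mathscr G(E))\cdot H^{2n-i}$ can be computed on $\psi^{-1}(W)$ after a further (iterated hyperplane) restriction to a complete intersection curve/surface missing the bad locus, and there $\mathscr G(E)$ pulls back from the trivializations of Proposition \ref{pushforward-locally-free-large-open}, giving $0$.

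The main obstacle is strong $H$-semistability. Here is the plan. First pull back along $\psi$: since $\psi\colon X^n \to S^n(X)$ is finite and $S_n$-invariant, and $\varphi$ is birational with $\Hilb_X^n \setminus V$ of codimension $2$, the sheaf $\varphi_*(E\vert_V)$ on $W$ has, over the smooth locus, the same behaviour as $E$ itself under restriction to curves, so its reflexive pullback along $\psi$ is controlled by $E$. Concretely, I would argue as follows: to test strong semistability of $\mathscr G(E)$, by \cite[Proposition 4.5]{La} the category $\mathrm{Vect}_0^s(X^n)$ is independent of $H$, so we may test against one convenient polarization; then by Langer's restriction theorems the semistability of a reflexive sheaf can be tested on restriction to a sufficiently general complete-intersection curve $C \subset X^n$, which we may take to lie inside $\psi^{-1}(W)$ (as that open set is big). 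Composing $C \hookrightarrow \psi^{-1}(W) \xrightarrow{\ \psi\ } W \xhookleftarrow{} \Hilb_X^n|_{\text{over }W}$ and using the isomorphism \eqref{eqn-8} $\varphi^*\varphi_*(E\vert_V) \xrightarrow{\sim} E\vert_V$ together with the fact that $\varphi$ is an isomorphism over $S^n(X)_{\rm sm}$, a general such $C$ maps to a curve $C' \subset \Hilb_X^n$ with $\mathscr G(E)\vert_C$ pulled back, up to the finite map $\psi|_C$, from $E\vert_{C'}$; since $E$ is numerically flat, $E\vert_{C'}$ is semistable of degree $0$, and pullback along a finite morphism of smooth curves preserves semistability and degree $0$, while Frobenius pullback of $\mathscr G(E)$ is handled by Corollary \ref{compatibility-Frobenius-pullback} (which gives $F^*\varphi_*(E\vert_V)\cong\varphi_*(F^*E\vert_V)$, so $F^*\mathscr G(E)$ is of the same form $\mathscr G(F^*E)$ with $F^*E$ again numerically flat). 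This reduces strong semistability of $\mathscr G(E)$ to that of $E$, completing the verification and hence the proof. The delicate point to get right is that $j_*$ of the pullback genuinely computes the reflexive hull — i.e. that $\psi^*\varphi_*(E\vert_V)$ is already reflexive on $\psi^{-1}(W)$, being locally free there — and that no spurious torsion is introduced; this is where the codimension-$2$ bound on the complement and the local-freeness from Proposition \ref{pushforward-locally-free-large-open} are used essentially.
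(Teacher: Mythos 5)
Your overall route is the paper's: reflexivity of $(\;\cdot\;)^{\dv}$, the identification $\mathscr G(E)\vert_{\psi^{-1}(W)}\cong\psi^*\varphi_*(E\vert_V)$ via Proposition \ref{pushforward-locally-free-large-open}, testing $H$-semistability on a general complete-intersection curve $C\subset\psi^{-1}(W)$ whose image lies in $S^n(X)_{\rm sm}$, lifted through $\varphi^{-1}$ and compared with $E$ via \eqref{eqn-8}, and handling Frobenius pullbacks by Corollary \ref{compatibility-Frobenius-pullback} together with uniqueness of the reflexive extension, so that $F^*\mathscr G(E)\cong\mathscr G(F^*E)$ and strong semistability follows by iteration; then conclude with \cite[Theorem 2.2]{La2}. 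All of this matches the paper and is fine (for the curve step it is cleaner to use the composite morphism $C\to V$ directly rather than the image curve $C'$, which need not be smooth, but that is cosmetic).

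The genuine gap is in the ${\rm ch}_2$ step, and it originates in your codimension count. You claim $X^n\setminus\psi^{-1}(W)$ has codimension $\geq 2$, apparently transported from ${\rm codim}_{\Hilb_X^n}(\Hilb_X^n\setminus V)=2$; but $\psi$ is finite, so the correct count comes from the stratification of $S^n(X)$: every type outside $W$ has $r\leq n-2$, hence $S^n(X)\setminus W$ and $X^n\setminus\psi^{-1}(W)$ have codimension $\geq 4$ (the Hilbert-scheme complement is bigger only because the fibres of $\varphi$ over deeper strata are positive-dimensional). With only the bound $\geq 2$ that you state, neither of your packagings of ${\rm ch}_2(\mathscr G(E))\cdot H^{2n-2}=0$ works: a general complete-intersection surface cannot be chosen to avoid a codimension-$2$ bad locus, and the claim that ``intersecting with $H^{2n-2}$ kills the correction terms'' is false for ${\rm ch}_2$, since a class supported on a codimension-$2$ subset pairs nontrivially with $H^{2n-2}$ in general. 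Moreover, even after the correct bound puts a general complete-intersection surface $S$ inside $\psi^{-1}(W)$, your justification that ${\rm ch}_2(\mathscr G(E)\vert_S)=0$ because the sheaf ``pulls back from the trivializations of Proposition \ref{pushforward-locally-free-large-open}'' does not hold: that proposition gives local triviality near the exceptional fibres, not global triviality on $S$, and local freeness alone says nothing about ${\rm ch}_2$. The paper closes this by showing $\mathscr G(E)\vert_S$ is numerically flat on $S$ (the same lifting argument, applied to arbitrary curves mapping to $S$, not only complete intersections) and then invoking \cite[Theorem 2.2]{La2} on the surface $S$ to get ${\rm ch}_2(\mathscr G(E)\vert_S)=0$. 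With the corrected codimension and that final step your argument is complete; as written, the ${\rm ch}_2$ vanishing is unproved.
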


\begin{proof}
It is proved in Proposition \ref{pushforward-locally-free-large-open} that $\varphi_*(E)$ is locally free on $W$. 
Since $X^n\setminus \psi^{-1}(W)$ has codimension $\geq 4$, it follows that 
\begin{equation}\label{bundle-G}
	\mathscr{G}(E) := (j_*\psi^*\varphi_*(E\vert_V))^{\vee\vee} 
\end{equation} 
is a coherent reflexive sheaf on $X^n$. 
For notational simplicity, we denote by $\mc G$ the sheaf $\mathscr G(E)$. 
Note that $\mc G\vert_{\psi^{-1}(W)} = \psi^*\varphi_*(E\vert_V)$ is locally free. 

Choose $m\gg0$ so that $mH$ is very ample. 
Choose general hyperplanes $H_1,\ldots,H_{d-1}\in \vert mH\vert$
so that $C = H_1 \cap H_2 \cap \cdots \cap H_{d-1} \stackrel{i}{\hookrightarrow} \psi^{-1}(W)$ 
is a smooth complete intersection curve whose image $\psi(C)$ lies in the smooth locus of $S^n(X)$. 
Since $\varphi:\varphi^{-1}(S^n(X)_{\rm sm})\to S^n(X)_{\rm sm}$ 
is an isomorphism, we can lift $i$ to a morphism $\tilde{i}$ which 
makes the following diagram commute.
\[
\xymatrix{
	&& V \ar[d]^{\varphi} \ar@{^(->}[r] & \Hilb_X^n \ar[d]^\varphi \\
	C \ar@{^(->}[r]^-i\ar@/^1pc/[urr]^-{\tilde i} & \psi^{-1}(W) \ar[r]^\psi & W\ar@{^(->}[r] & S^n(X)  
}
\]
It follows from Proposition \ref{pushforward-locally-free-large-open} that 
$$i^*\mc G\cong \tilde i^*\varphi^*\varphi_*(E\vert_V) \cong \tilde i^*(E\vert_V)\,.$$
Since $E$ is in $\mc C^{\rm nf}(\Hilb_X^n)$ it follows that $i^*\mc G$ is semistable
of degree $0$. This shows that $\mc G$ is $H$-semistable. 

In Corollary \ref{compatibility-Frobenius-pullback} we proved that the locally free sheaves  
$F^*\varphi_*(E\vert_V)$ and $\varphi_*(F^*E\vert_V)$ are isomorphic. 
Since $X^n$ is smooth the Frobenius is faithfully flat and so $F^*\mc G$ is reflexive 
(use the characterization that a coherent module $M$ over a local ring $A$ is 
reflexive iff it sits in a short exact sequence $0\to M\to A^{\oplus r}\to A^{\oplus s})$. 
The restriction of $F^*\mc{G}$ on $\psi^{-1}(W)$ is 
$$F^*\psi^*\varphi_*(E\vert_V)\cong \psi^*F^*\varphi_*(E\vert_V)\cong \psi^*\varphi_*(F^*E\vert_V).$$
Since the reflexive extension on $X^n$ is unique (see \cite[Proposition 1.6, p.~126]{Ha2}), we conclude that 
$$F^*\mc G\cong (j_*(\psi^* \varphi_*(F^*E\vert_V)))^{\vee\vee}\,.$$ 
Since $E \in \mc C^{\rm nf}(\Hilb_X^n)$ we have $F^*E\in \mc C^{\rm nf}(\Hilb_X^n)$; 
then following the arguments in the preceding paragraph, we see that $F^*\mc G$ 
is $H$-semistable. In this way we can show that all Frobenius pullbacks 
of $\mc G$ are semistable. This shows that $\mc G$ is strongly $H$-semistable.

It is clear from above that ${\rm ch}_1(\mc G)\cdot H^{d-1} = 0$. 
Choose general hyperplanes $H_1, \ldots, H_{d-2}$ in the linear system $\vert mH\vert$ so that 
$$S = H_1 \cap H_2 \cap \cdots \cap H_{d-2} \subset \psi^{-1}(W)$$ 
is a smooth surface. We can do this since $X^n\setminus \psi^{-1}(W)$ has codimension $\geq 4$. 
It suffices to show that ${\rm ch}_2(\mc G\vert_S) = 0$. 
Now $\mc G\vert_S$ is locally free as $S\subset \psi^{-1}(W)$ and $\mc G$ 
is locally free on $\psi^{-1}(W)$. Therefore, in view of \cite[Theorem 2.2]{La2}, it suffices to 
show that $\mc G\vert_S\in \mc C^{\rm nf}(S)$. But this follows from the arguments as in the second paragraph of this proof. 
Therefore, we have $\mc G\in {\rm Vect}_0^s(X^n)$ and hence by \cite[Theorem 2.2]{La2} 
$\mc G$ is locally free and is in $\mc C^{\rm nf}(X^n)$. This proves the proposition.
\end{proof}

\begin{proposition}\label{additive-functor}
With the above notations, 
$$\mathscr G : \mc C^{\rm nf}(\Hilb_X^n) \longrightarrow \mc C^{\rm nf}(X^n)$$ 
is a additive tensor functor. 
\end{proposition}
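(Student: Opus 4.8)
The plan is to verify directly that $\mathscr G$ respects the additive and monoidal structure, working over the large open subset $\psi^{-1}(W)$ where everything is locally free and then invoking uniqueness of reflexive extensions to conclude on all of $X^n$. First I would check additivity: given $E_1, E_2 \in \mc C^{\rm nf}(\Hilb_X^n)$, their direct sum is again numerically flat, and since $\varphi_*$ and $\psi^*$ and $j_*$ are all additive, and the reflexive hull of a direct sum is the direct sum of the reflexive hulls, one obtains $\mathscr G(E_1 \oplus E_2) \cong \mathscr G(E_1) \oplus \mathscr G(E_2)$. More importantly one must check $\mathscr G$ is a functor on morphisms — a morphism $E_1 \to E_2$ in $\mc C^{\rm nf}(\Hilb_X^n)$ induces $\varphi_*(E_1\vert_V) \to \varphi_*(E_2\vert_V)$, hence after applying $\psi^*$, $j_*$, and $(-)^{\dv}$ a morphism $\mathscr G(E_1) \to \mathscr G(E_2)$; functoriality (respecting composition and identities) is immediate because each of these operations is functorial.

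Next I would handle the tensor structure, which is the substantive point. One needs a natural isomorphism $\mathscr G(E_1 \otimes E_2) \cong \mathscr G(E_1) \otimes \mathscr G(E_2)$ and $\mathscr G(\mc O_{\Hilb_X^n}) \cong \mc O_{X^n}$. For the unit object: $\varphi_*\mc O_V = \mc O_W$ by normality of $S^n(X)$ and the fact that $\varphi$ is birational with connected fibers (this is essentially contained in the proof of Proposition \ref{pushforward-locally-free-large-open}), so $\psi^*\varphi_*\mc O_V = \mc O_{\psi^{-1}(W)}$ and its reflexive extension is $\mc O_{X^n}$. For the tensor product, the key is the projection-formula isomorphism: since the natural map $\varphi^*\varphi_*(E_i\vert_V) \to E_i\vert_V$ is an isomorphism by Proposition \ref{pushforward-locally-free-large-open}, one gets
\[
\varphi_*\big((E_1\otimes E_2)\vert_V\big) \cong \varphi_*\big(\varphi^*\varphi_*(E_1\vert_V) \otimes E_2\vert_V\big) \cong \varphi_*(E_1\vert_V) \otimes \varphi_*(E_2\vert_V),
\]
the last step by the projection formula (valid since $\varphi_*(E_1\vert_V)$ is locally free on $W$). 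Pulling back along $\psi$ commutes with tensor products, so over $\psi^{-1}(W)$ we have $\psi^*\varphi_*((E_1\otimes E_2)\vert_V) \cong \psi^*\varphi_*(E_1\vert_V) \otimes \psi^*\varphi_*(E_2\vert_V)$, an isomorphism of locally free sheaves. Finally, taking the double dual of $j_*$ of both sides: since $X^n \setminus \psi^{-1}(W)$ has codimension $\geq 4$ and the reflexive extension from $\psi^{-1}(W)$ is unique (\cite[Proposition 1.6, p.~126]{Ha2}), and since the tensor product of two reflexive sheaves that are locally free in codimension $\leq 3$ agrees with the reflexive hull of the restriction of the tensor product, we obtain $\mathscr G(E_1 \otimes E_2) \cong \mathscr G(E_1) \otimes \mathscr G(E_2)$. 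Naturality of all these isomorphisms (compatibility with the associativity, commutativity, and unit constraints of the tensor categories) follows because each isomorphism used is itself natural and because two morphisms of reflexive sheaves that agree on $\psi^{-1}(W)$ agree everywhere.

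The main obstacle I anticipate is the bookkeeping around passing between $\psi^{-1}(W)$ and all of $X^n$: one must be careful that $(j_* (-))^{\dv}$ genuinely converts the tensor-product isomorphism on the open set into one on $X^n$, which rests on the facts that reflexive extension across a codimension $\geq 2$ locus is a functor and that $\mathscr G(E)$ is already reflexive (established in Proposition \ref{main-correspondence}), so no further "correction" is needed. A minor technical point is confirming the projection formula applies in the form needed — this is standard since $\varphi$ is proper and $\varphi_*(E_i\vert_V)$ is locally free on $W$ — and that all the isomorphisms assembled are compatible with the symmetry and unit constraints, which is routine but must be stated. None of these steps requires new ideas beyond Proposition \ref{pushforward-locally-free-large-open} and the uniqueness of reflexive extensions.
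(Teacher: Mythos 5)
Your proposal is correct and follows essentially the same route as the paper: define $\mathscr G$ on morphisms and check the additive and tensor compatibilities over $\psi^{-1}(W)$, where $\mathscr G(E)$ restricts to $\psi^*\varphi_*(E\vert_V)$, and then extend uniquely across the codimension $\geq 4$ complement using local freeness of the sheaves involved (Proposition \ref{main-correspondence}) and uniqueness of reflexive extensions. The paper's proof is terser — it leaves the projection-formula step, which rests on the isomorphism \eqref{eqn-8}, and the unit object implicit — so your write-up simply supplies details the authors omit.
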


\begin{proof}
	First we show that $\mathscr G$ is a functor. 
	Let $f : E \to E'$ be a morphism in the category $\mc C^{\rm nf}(\Hilb_X^n)$. 
	We need to find a canonical morphism $\mathscr G(f) : \mathscr G(E) \to \mathscr G(E')$ in $\mc C^{\rm nf}(X^n)$. 
	There is a morphism $\psi^*\varphi_*(f):\mathscr G(E)\vert_{\psi^{-1}(W)}\to \mathscr G(E')\vert_{\psi^{-1}(W)}$.
	Since $X^n\setminus \psi^{-1}(W)$ has codimension $\geq 4$ and 
	$\mathscr G(E)$, $\mathscr G(E')$ are locally free, it follows that this 
	morphism extends uniquely to give a morphism $\mathscr G(E)\to \mathscr G(E')$. 
	
	The bundles $\mathscr G(E\oplus E')$ and $\mathscr G(E)\oplus \mathscr G(E')$ are naturally isomorphic
	on $\psi^{-1}(W)$ and so they are naturally isomorphic. Similarly, 
	$\mathscr G(E\otimes E')$ is naturally isomorphic to 
	$\mathscr G(E)\otimes \mathscr G(E')$.
\end{proof}

\subsection{Homomorphism of group schemes}
Fix distinct $k$-valued points $x_1, \ldots, x_n \in X(k)$ of $X$. 
Let $\tilde{x} \in \Hilb_X^n(k)$ be such that $\varphi(\widetilde{x}) = \psi(x_1, \cdots, x_n)\in S^n(X)_{\rm sm}$.
For any locally free sheaf $E$ on $\Hilb_X^n$, there are natural isomorphisms 
of fibers
$$E_{\tilde x}\cong (\varphi_*E)_{\varphi(\tilde x)}\cong (\psi^*\varphi_*(E))_{(x_1,x_2,\ldots,x_n)}.$$
Consider the following diagram. 
$$
\xymatrix{
	(\mc C^{\rm nf}(\Hilb_X^n), \otimes, T_{\tilde x}, \mathcal{O}_{\Hilb_X^n})\ar[r] & 
							(\mc C^{\rm nf}(X^n), \otimes, T_{(x_1, \ldots, x_n)}, \mathcal{O}_{X^n})\ar[d]\\
	(\mc C^{\rm nf}(\Hilb_X^n), \otimes, T_{\widetilde{nx}}, \mathcal{O}_{\Hilb_X^n})\ar[u] & 
							(\mc C^{\rm nf}(X^n), \otimes, T_{(x, \ldots, x)}, \mathcal{O}_{X^n})
	}
$$
The horizontal arrow is a morphism of Tannakian categories due 
to Propositions \ref{main-correspondence} and \ref{additive-functor}. The two vertical arrows 
are due to Lemma \ref{invariance-of-base-point}. Thus, we get a 
homomorphism of $S$-fundamental group schemes 
\begin{equation}
	f : \pi^S(X^n, (x, \ldots, x)) \longrightarrow \pi^S(\Hilb_X^n, \widetilde{nx})\,. \nonumber
\end{equation}
For $\sigma\in S_n$ we get an automorphism $\sigma_*$ of  
$\pi^S(X^n,(x,\ldots,x))$. It is easily checked that $f\circ\sigma_*=f$. 
By \cite[Theorem 4.1, p.~842]{La2} there is an isomorphism
	\begin{equation}
	\pi^S(X^n, (x, \ldots, x)) \stackrel{\sim}{\longrightarrow} \pi^S(X, x) \times_k \cdots \times_k \pi^S(X, x). \nonumber
	\end{equation}
By abuse of notation, denote the composite of $f$ and the inverse 
of this isomorphism by $f$. Thus, we have a homomorphism 
\begin{equation}\label{S-group-homomorphism}
	f : \pi^S(X, x) \times_k \cdots \times_k \pi^S(X, x) \longrightarrow \pi^S(\Hilb_X^n, \widetilde{nx})
\end{equation}
which satisfies $f\circ \sigma_* = f$. It follows from Lemma \ref{group-lemma}
that the homomorphism 
of the $S$-fundamental group schemes in \eqref{S-group-homomorphism} factors through a homomorphism 
\begin{equation}\label{Pi-S-ab-homomorphism}
	\tilde{f} : \pi^S(X, x)_{\rm ab} \longrightarrow \pi^S(\Hilb_X^n, \widetilde{nx})\,. 
\end{equation}
This completes the construction of our homomorphism of $k$-group schemes.

\section{Isomorphism of Group Schemes}
In this section we use \cite[Proposition 2.21]{DMOS} to show that 
the homomorphism $\tilde f$ in \eqref{Pi-S-ab-homomorphism} is an isomorphism. 

\subsection{$S_n$-invariant line bundles}
We begin with a discussion on why a numerically
flat $S_n$-invariant line bundle on $X^n$ descends to a
line bundle on $S^n(X)$. A more general
result is proved in \cite[Proposition 3.6]{Fo-77}. 
For the benefit of the authors and the 
reader we include a proof of the statement that we need.

\begin{proposition}\label{descent-line-bundles}
Let $\mc L$ be a numerically flat $S_n$-invariant line bundle on $X^n$. 
Then there is a numerically flat line bundle $L'$ on $S^n(X)$ such that $\psi^*L'=\mc L$.
\end{proposition}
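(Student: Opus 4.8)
The plan is to use descent along the finite morphism $\psi : X^n \to S^n(X)$, which is a quotient by the finite group $S_n$. First I would recall that by Kempf's descent lemma (or the theory of equivariant sheaves), a vector bundle $\mc E$ on $X^n$ descends to $S^n(X)$ if and only if it admits an $S_n$-linearization such that, at every point $\tilde y \in X^n$ lying over a point $y \in S^n(X)$, the stabilizer $\mathrm{Stab}_{S_n}(\tilde y)$ acts trivially on the fiber $\mc E_{\tilde y}$. For a line bundle, the main issue is thus to produce a linearization with trivial stabilizer action on fibers over the ramification locus; since $\mc L$ is $S_n$-invariant, $\sigma^*\mc L \cong \mc L$ for all $\sigma$, but a priori there could be a cohomological obstruction in $H^2(S_n, \mathcal{O}_{X^n}^\times)$ to choosing compatible isomorphisms, and even once a linearization exists one must check the stabilizer condition pointwise.

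The key steps, in order, would be: (i) Observe that $\pic(X^n) \cong \pic(X)^{\times n}$ by the Künneth-type formula for Picard groups of products (using that $X$ has a $k$-rational point), so an $S_n$-invariant line bundle $\mc L$ corresponds to an $n$-tuple $(L_1,\dots,L_n)$ of line bundles on $X$ which is permuted by $S_n$ up to isomorphism; $S_n$-invariance forces all the $L_i$ to be isomorphic to a single line bundle $L_0$, so $\mc L \cong L_0 \boxtimes \cdots \boxtimes L_0$. (ii) For such an external tensor power there is an evident $S_n$-linearization coming from permuting the tensor factors, so a linearization exists with no cohomological obstruction. (iii) Check the stabilizer condition: at a point $\tilde y = (y_1,\dots,y_n) \in X^n$, the stabilizer in $S_n$ is the Young subgroup permuting the equal coordinates, and its action on the fiber $(L_0)_{y_1} \otimes \cdots \otimes (L_0)_{y_n}$ via the permutation linearization permutes the tensor factors indexed by equal $y_i$'s; since those factors are literally the same one-dimensional vector space $(L_0)_{y_i}$, the permutation acts by the sign of the permutation on the factors (as a transposition of two identical copies acts by swapping them in $V \otimes V$, which on the one-dimensional $V \otimes V$ is multiplication by... ) — here one must be careful: swapping the two factors of $V \otimes V$ for $\dim V = 1$ is the identity, not multiplication by $-1$, so the stabilizer acts trivially and descent applies. (iv) Conclude that $\mc L$ descends to a line bundle $L'$ on $S^n(X)$ with $\psi^* L' = \mc L$; one may need to pass to a twist of the naive linearization, but in the line bundle case the one-dimensionality makes the permutation action on fibers trivial automatically.

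Finally I would verify that the descended line bundle $L'$ is numerically flat. Since $\psi$ is finite surjective and $\psi^* L' = \mc L$ is numerically flat, and numerical flatness can be tested by pullback to curves, given any curve $C \to S^n(X)$ one lifts (after finite base change) to a curve $C' \to X^n$ with $\psi \circ (C' \to X^n)$ factoring through $C \to S^n(X)$; then $\deg$ and semistability of the pullback of $L'$ to $C'$ are controlled by $\mc L$, and a finite pullback multiplies degree by the degree of the map, so $\deg$ zero descends. Alternatively, since $S^n(X)$ is normal and $L'$ is a line bundle, numerical flatness of $L'$ is equivalent to $L'$ and $(L')^{-1}$ both being nef, which follows from $\psi^* L' = \mc L$ being nef together with $\psi$ being finite surjective (a line bundle on a projective variety is nef iff its pullback under a finite surjective morphism is nef). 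The main obstacle I expect is step (iii): correctly analyzing the stabilizer action on the fiber of the external power at a ramification point and confirming it is trivial (the one-dimensionality of the fibers is what saves us — for higher rank bundles this would genuinely fail, which is exactly why the argument is phrased only for line bundles and why the more general statement in \cite{Fo-77} requires more work).
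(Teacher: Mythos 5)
Your reduction to an external tensor power is the same first step as the paper's, but your justification is stated too strongly: $\pic(X^n)\cong\pic(X)^{\times n}$ is false whenever $X$ has irregularity $q(X)>0$, because line bundles of Poincar\'e type (divisorial correspondences) give extra classes on a product. What is true, and what the paper invokes, is Langer's result $({\rm Pic}^\tau(X^n))_{\rm red}=\prod_{i=1}^n({\rm Pic}^\tau(X))_{\rm red}$ \cite[Corollary 4.7]{La2}; since $\mc L$ is numerically flat it lies in ${\rm Pic}^\tau$, so one still gets $\mc L\cong\bigotimes_i p_i^*L_0$ with $L_0$ numerically trivial, and this part of your argument is repairable. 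Your fiber computation (a transposition acts as the identity on the one-dimensional $V\otimes V$) and your verification that the descended $L'$ is numerically flat are both fine.

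The genuine gap is the appeal to Kempf's descent lemma. That lemma (``an equivariant bundle descends if and only if stabilizers of closed points act trivially on the fibers'') is proved for linearly reductive stabilizers, i.e.\ in characteristic $0$ or when $p$ does not divide the order of the stabilizer: its proof produces invariant local generators by averaging. Here the stabilizers are the Young subgroups $S_{n_1}\times\cdots\times S_{n_r}$, and the paper only assumes $p>3$ with $n$ arbitrary, so $p$ divides the stabilizer order at every point with some $n_i\geq p$ --- in particular at the point $nx$ of type $\langle n\rangle$, which is exactly where the proposition is later used. In this wild situation the criterion is not merely unproved but false in general: already for a totally ramified $\mathbb{Z}/p$-quotient of a formal disc in characteristic $p$ one computes $H^1(\mathbb{Z}/p,\,1+\widehat{\m})\neq 0$, i.e.\ there exist equivariant line bundles whose stabilizer acts trivially on the fiber at the fixed point and which nevertheless do not descend; so one-dimensionality of the fiber does not save you once the ramification is wild. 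This is precisely why the paper, following Fogarty \cite{Fo-77}, constructs $L'$ by hand: it covers $X^n$ by $S_n$-stable opens $U_\alpha^n$, chooses ${\rm Stab}(1)$-invariant generating sections $t_\alpha$ of $p_1^*L_0$, and descends the multiplicative (norm-type) cocycles $\prod_{i}\sigma_i^*(f_{\alpha\beta})$ --- a construction involving no division by the group order and hence valid in every characteristic. To close the gap you must either assume $p>n$ (not available here), or replace Kempf's lemma by this norm construction, or cite \cite[Proposition 3.6]{Fo-77} directly.
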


\begin{proof}
The assertion that $L'$, if it exists, is numerically flat follows easily. 
We now prove its existence.

Let ${\rm Pic}^\tau$ denote the subscheme of the Picard scheme 
whose closed points parametrize numerically trivial line bundles. 
By \cite[Corollary 4.7]{La2} we have 
\[({\rm Pic}^\tau(X^n))_{\rm red}=\prod_{i=1}^n({\rm Pic}^\tau(X))_{\rm red}.\]
Thus, there is a numerically trivial line bundle $L_0$ on $X$ 
such that $\mc L=\bigotimes\limits_{i=1}^np_i^*L_0$.

The rest of the proof is a more detailed version of the first 
part of the proof in \cite[Proposition 3.6]{Fo-77}.
Let $H\subset S_n$ denote the subgroup ${\rm Stab}(1)$.
Let $D\subset X$ be an ample divisor such that $L_0$ is trivial
on $U=X\setminus D$. Let $s\in \Gamma(U,L_0)$ be a global 
section which generates $L_0$ over $U$. Then $p_1^*s$ is a generating section 
of $p_1^*L_0$ over the open subset $U\times X\times \ldots\times X$ 
and this section is invariant under the action of the subgroup 
$H$. In particular, the section $p_1^*s$ also generates the 
line bundle $p_1^*L_0$ over the smaller open subset $U^{n}$ 
and is invariant under the action of the subgroup $H$.

Given $(x_1,x_2,\ldots,x_n)\in X^n$, let $D$ be an ample divisor 
in $X$ which does not contain any of the $x_i$. If $U=X\setminus D$, then it 
is clear that $(x_1,\ldots,x_n)$ is in the $S_n$-invariant open subset 
$U^{n}$. Thus, we can cover $X^n$ by open subsets of this type.
Using this observation, we can find 
a finite collection of ample divisors $D_\alpha\subset X$ 
(set $U_\alpha=X\setminus D_\alpha$) and sections $s_\alpha\in \Gamma(U_\alpha,L_0)$ 
such that 
\begin{enumerate}
	\item $t_\alpha:=p_1^*s_\alpha$ generates $p_1^*L_0$ on the open subset 
	$U_\alpha^{n}$,
	\item $t_\alpha$ is invariant under $H$, and
	\item $X^n=\bigcup_\alpha U_\alpha^{ n}$. 
\end{enumerate}
Define functions $f_{\alpha\beta}\in \mc O_X(U_\alpha^n\cap U_\beta^n)^{\times}$ by 
$$t_\alpha=f_{\alpha\beta}t_\beta.$$
It follows that $f_{\alpha\beta}$ are invariant under $H$.
Let $\sigma_i:=(1,i)$ for $1\leq i\leq n$ be left coset representatives 
of $H$ in $G$. The functions $\prod_{i=1}^n\sigma_i^*(f_{\alpha\beta})$
are clearly invariant under $S_n$ and satisfy the cocyle condition. 
Let $V_\alpha\subset S^n(X)$ be $\psi(U_\alpha^{n})$. 
It is clear that $V_\alpha$ is open and $\psi^{-1}(V_\alpha)=U_\alpha^{ n}$.
Thus, using the above cocycle we get a line bundle on $S^n(X)$ which 
is trivial on $V_\alpha$. It is clear that the pullback of this line bundle 
is isomorphic to $\bigotimes_{i=1}^np_i^*L_0$, which completes the proof
of the proposition.
\end{proof}

\subsection{Faithfully flatness}
In this subsection we use to show that 
the homomorphism $\tilde f$ in \eqref{Pi-S-ab-homomorphism} is 
faithfully flat. 
We begin by recalling \cite[Proposition 2.21]{DMOS}  
for the convenience of the reader. 

Let $\theta : G \longrightarrow G'$ 
be a homomorphism of affine group schemes over $k$ and let 
\begin{equation}\label{eqn-hom-f}
	\widetilde{\theta} : \Rep_k(G') \longrightarrow \Rep_k(G) 
\end{equation}
be the functor given by sending $\rho' : G' \to \GL(V)$ to $\rho'\circ \theta : G \to \GL(V)$. 
An object $\rho : G \to \GL(V)$ in $\Rep_k(G)$ is said to be a {\it subquotient} of an object 
$\eta : G \to \GL(W)$ in $\Rep_k(G)$ if there are two $G$-submodules $V_1 \subset V_2$ of $W$ 
such that $V \cong V_2/V_1$ as $G$-modules. 

\begin{proposition}[Proposition 2.21, \cite{DMOS}]\label{DM}
	Let $\theta : G \longrightarrow G'$ be a homomorphism of affine algebraic groups over $k$. Then 
	\begin{enumerate}[(a)]
		\item $\theta$ is faithfully flat if and only if the functor $\widetilde{\theta}$ in \eqref{eqn-hom-f} 
		is fully faithful and given any subobject $W \subset \widetilde{\theta}(V')$, with $V' \in \Rep_k(G')$, 
		there is a subobject $W' \subset V'$ in $\Rep_k(G')$ such that $\widetilde{\theta}(W') \cong W$ in $\Rep_k(G)$. 
		
		\item $f$ is a closed immersion if and only if every object of $\Rep_k(G)$ 
		is isomorphic to a subquotient of an object of the form $\widetilde{\theta}(V')$, 
		for some $V' \in \Rep_k(G')$. 
	\end{enumerate}
\end{proposition}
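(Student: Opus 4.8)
The plan is to reduce both statements to facts about the Hopf algebras $A := \mathcal O(G)$ and $A' := \mathcal O(G')$: a homomorphism of affine group schemes over $k$ is faithfully flat if and only if the induced map $\theta^\#: A' \to A$ is injective, and it is a closed immersion if and only if $\theta^\#$ is surjective (see \cite{Wa}). The dictionary between the representation categories and the Hopf algebras that I would use is the following: $A$ is the filtered union of its finite-dimensional subcomodules; every finite-dimensional comodule underlies a representation; the matrix-coefficient map $c_V: V^\vee \otimes V \to A$, $\xi \otimes v \mapsto (g \mapsto \xi(g\cdot v))$, is such that the images $c_V(V^\vee\otimes V)$ span $A$ as $V$ runs over $\Rep_k(G)$, and $c_W(W^\vee\otimes W) \supseteq c_V(V^\vee\otimes V)$ whenever $V$ is a subquotient of $W$; and finally the matrix coefficients of $\tilde\theta(V')$ are exactly the images under $\theta^\#$ of those of $V'$. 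Note also that $\tilde\theta$ is always faithful and exact, since on underlying vector spaces it is the identity; so the only non-formal part of ``fully faithful'' is fullness.

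For part (a), in the forward direction I would use that a faithfully flat $\theta$ realizes $G'$ as the quotient $G/N$ with $N := \ker\theta$, so that $\tilde\theta$ identifies $\Rep_k(G')$ with the full subcategory of $\Rep_k(G)$ of representations on which $N$ acts trivially; this subcategory is visibly closed under subobjects (a $G$-stable subspace of an $N$-fixed representation is again $N$-fixed, hence $G/N$-stable), giving both required properties. Conversely, assuming $\tilde\theta$ is fully faithful and every $G$-subobject of every $\tilde\theta(V')$ descends to a $G'$-subobject, I would prove $\theta^\#$ injective: a putative nonzero $a'$ in $\ker\theta^\#$ can be written as a matrix coefficient $c_{V'}(\xi'\otimes v')$ for some $V' \in \Rep_k(G')$; the vanishing of $\theta^\#(a') = c_{\tilde\theta(V')}(\xi'\otimes v')$ forces the $G$-submodule of $\tilde\theta(V')$ that $v'$ generates to lie in a proper $G$-subobject of $\tilde\theta(V')$ that is seen by $A$ but not by $A'$; transporting this situation back to $G'$ via the subobject-descent hypothesis together with fullness contradicts $a' \neq 0$.

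For part (b), in the backward direction, if every object of $\Rep_k(G)$ is a subquotient of some $\tilde\theta(V')$ then, since matrix coefficients of all representations span $A$ and those of a subquotient of $W$ lie among those of $W$, every matrix coefficient of $G$ lies in $\mathrm{im}(\theta^\#)$; hence $\theta^\#$ is surjective and $\theta$ is a closed immersion. In the forward direction, if $\theta$ is a closed immersion then $\theta^\#: A' \twoheadrightarrow A$; given $V \in \Rep_k(G)$, embed it as a $G$-subcomodule of $A^{\oplus \dim V}$, choose finitely many elements of $A'^{\oplus \dim V}$ mapping onto a basis of $V$, and let $W' \subseteq A'^{\oplus \dim V}$ be the finite-dimensional $G'$-subcomodule they generate. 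The resulting $G$-equivariant map $\tilde\theta(W') \to A^{\oplus \dim V}$ has image containing $V$, so the preimage of $V$ in $\tilde\theta(W')$ is a $G$-subobject surjecting onto $V$, exhibiting $V$ as a subquotient of $\tilde\theta(W')$.

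The step I expect to be the main obstacle is the ``subobject'' half of part (a) --- extracting injectivity of $\theta^\#$ from the categorical hypotheses. One must choose $V'$ so that the hypothetical kernel element genuinely appears as a matrix coefficient, and then argue that the $G$-submodule this produces is not merely abstractly isomorphic to an object in the image of $\tilde\theta$ but is literally of the form $\tilde\theta(W')$ with $W' \subseteq V'$ a subcomodule, which is precisely where fullness of $\tilde\theta$ must be combined with the subobject-descent hypothesis. The remaining directions are routine once one is fluent in passing between representations, comodules, matrix coefficients, and the structure of the Hopf algebras as filtered unions of finite-dimensional subcomodules.
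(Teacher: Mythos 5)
The paper offers no proof of this proposition at all --- it is quoted verbatim from Deligne--Milne \cite{DMOS} as an external input --- so there is nothing internal to compare your argument against; judged on its own, your outline is essentially the standard Deligne--Milne/Waterhouse argument and it is correct. Two remarks. First, your dictionary (faithfully flat $\Leftrightarrow$ $\theta^\#:A'\to A$ injective, closed immersion $\Leftrightarrow$ $\theta^\#$ surjective) is right over a field, but the half ``injective Hopf map $\Rightarrow$ faithfully flat'' is itself a nontrivial theorem (Waterhouse, Thm.\ 14.1) and should be cited as such; it is in fact the main imported ingredient. Second, the step you flag as the obstacle --- the converse half of (a) --- does close along the lines you sketch: writing $0\neq a'\in\ker\theta^\#$ as $c_{V'}(\xi'\otimes v')$ and $\rho_G(v')=\sum_j w_j\otimes b_j$ with the $b_j$ linearly independent, the vanishing $\theta^\#(a')=\sum_j \xi'(w_j)b_j=0$ forces $\xi'$ to kill the $G$-subcomodule $M=\mathrm{span}(w_j)\ni v'$ of $\tilde\theta(V')$; the descent hypothesis gives $W'\subseteq V'$ with $\tilde\theta(W')\cong M$, and fullness applied to the composite $\tilde\theta(W')\stackrel{\sim}{\to}M\hookrightarrow\tilde\theta(V')$ upgrades ``abstractly isomorphic'' to ``equal as subobjects'', so $M$ is $G'$-stable; then $\rho_{G'}(v')\in M\otimes A'$ and $a'=(\xi'\otimes\mathrm{id})\rho_{G'}(v')\in\xi'(M)\cdot A'=0$, a contradiction. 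Your forward direction of (a) via $G'\cong G/\ker\theta$ is correct but imports the quotient theory; a lighter route uses injectivity of $\theta^\#$ directly ($\mathrm{id}\otimes\theta^\#$ injective gives fullness, and $(W\otimes A)\cap(V'\otimes A')=W\otimes A'$ gives subobject descent). A final point in favour of your Hopf-algebraic treatment: the paper applies the proposition to $\pi^S$, which is pro-algebraic rather than algebraic, and your argument works verbatim for arbitrary affine group schemes.
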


\begin{proposition}\label{faithfully-flatness}
	The homomorphism $$\tilde f:\pi^S(X, x)_{\rm ab}\to \pi^S(\Hilb_X^n, \widetilde{nx})$$ 
	defined in \eqref{Pi-S-ab-homomorphism} is faithfully flat.
\end{proposition}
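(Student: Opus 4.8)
The plan is to apply part (a) of Proposition~\ref{DM} to the homomorphism $\tilde f$. Concretely, I must verify two things: first, that the pullback functor $\widetilde{\tilde f}:\Rep_k(\pi^S(\Hilb_X^n,\widetilde{nx}))\to \Rep_k(\pi^S(X,x)_{\rm ab})$ is fully faithful; and second, that every subobject of $\widetilde{\tilde f}(V')$ comes from a subobject of $V'$. Under the Tannakian dictionary, $\Rep_k(\pi^S(\Hilb_X^n,\widetilde{nx}))$ is $\mc C^{\rm nf}(\Hilb_X^n)$, while $\Rep_k(\pi^S(X,x)_{\rm ab})$ is identified (via Corollary~\ref{S_n-equivariance-str} and the factorization of $f$ through $f_0$) with the subcategory of $\mc C^{\rm nf}(X^n)$ consisting of bundles associated to representations of $\pi^S(X,x)^n$ that factor through $\pi^S(X,x)_{\rm ab}$; these are $S_n$-invariant. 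The functor $\widetilde{\tilde f}$ is, up to these identifications, the functor $\mathscr G$ of Proposition~\ref{additive-functor}, so the content of faithful flatness is a statement about $\mathscr G$.

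First I would check full faithfulness of $\mathscr G$. Given $E,E'\in \mc C^{\rm nf}(\Hilb_X^n)$, a morphism $\mathscr G(E)\to \mathscr G(E')$ in $\mc C^{\rm nf}(X^n)$ restricts to a morphism over the big open set $\psi^{-1}(W)$, where $\mathscr G(E)=\psi^*\varphi_*(E|_V)$. Since $\psi$ is faithfully flat and $\varphi$ is birational with $\varphi^*\varphi_*(E|_V)\xrightarrow{\sim}E|_V$ (Proposition~\ref{pushforward-locally-free-large-open}), descending along $\psi$ and pulling back along $\varphi$ recovers a morphism $E|_V\to E'|_V$; because $\Hilb_X^n\setminus V$ has codimension $2$ and $E,E'$ are locally free, this extends uniquely to $E\to E'$. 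Running this construction both ways gives mutually inverse bijections $\Hom(E,E')\cong \Hom(\mathscr G(E),\mathscr G(E'))$, hence full faithfulness. (One should be a little careful that the target Hom is taken inside $\mc C^{\rm nf}(X^n)$ rather than all coherent sheaves, but since $\psi^{-1}(W)$ has codimension $\geq 4$ complement and reflexive sheaves have unique extension, there is no discrepancy.)

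Next, the subobject-lifting condition. Let $E'\in \mc C^{\rm nf}(\Hilb_X^n)$ and let $\mc F\subset \mathscr G(E')$ be a subobject in $\mc C^{\rm nf}(X^n)$ (so $\mc F$ is itself numerically flat, being a subbundle of a numerically flat bundle with the same slope, and moreover it is $S_n$-invariant since it lies in the relevant subcategory). Over $\psi^{-1}(W)$ we have $\mathscr G(E')=\psi^*\varphi_*(E'|_V)$, and an $S_n$-invariant subbundle of it descends along $\psi$ (using $S_n$-invariance to get descent data over the open locus, then extending) to a subbundle of $\varphi_*(E'|_V)$ on $W$; pulling back along $\varphi$ and using $\varphi^*\varphi_*(E'|_V)\cong E'|_V$ produces a subsheaf of $E'|_V$, which by codimension $2$ and local freeness extends to a subbundle $E\subset E'$ on $\Hilb_X^n$. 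One checks $E$ is numerically flat (it is a subbundle of a numerically flat bundle, of the same slope $0$, and its quotient is torsion-free of slope $0$, so strong semistability of $E$ and vanishing of the relevant Chern numbers follow) and that $\mathscr G(E)\cong \mc F$, since both agree over the big open set and are reflexive.

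The main obstacle I anticipate is the descent step: showing that an $S_n$-invariant subbundle of $\psi^*\varphi_*(E'|_V)$ actually descends along the quotient map $\psi:X^n\to S^n(X)$. Because $\psi$ is ramified along the big diagonal, $S_n$-invariance of a subsheaf is not automatically the same as genuine $S_n$-equivariant descent data on the relevant locus; one must either restrict to the \'etale locus of $\psi$ (the complement of the diagonal inside $\psi^{-1}(W)$, which still has complement of codimension $\geq 2$ in $X^n$) to perform the descent there and then take reflexive extensions, or invoke the type of argument used in Proposition~\ref{descent-line-bundles}. This is precisely where the structure of $W$ — chosen to consist only of types $\langle 1,\dots,1\rangle$ and $\langle 2,1,\dots,1\rangle$ — and the explicit description of the fibers of $\varphi$ over it will be used. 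Once descent is established, the remaining verifications that the descended sheaves land in the correct Tannakian subcategories are routine given the criterion \cite[Theorem 2.2]{La2} and the arguments already deployed in Proposition~\ref{main-correspondence}.
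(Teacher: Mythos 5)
Your framework (Proposition \ref{DM}(a), identification of the two representation categories, reduction to properties of $\mathscr G$) is the same as the paper's, but the proposal stops exactly where the real proof begins. The subobject-lifting step is not carried out: you yourself flag the descent of an $S_n$-invariant subbundle along $\psi$ as ``the main obstacle I anticipate'' and only gesture at two possible fixes, neither of which works as stated. Descending directly over $W$ is not routine, because $\psi$ is ramified (and not even flat) over the type-$\langle 2,1,\dots,1\rangle$ locus; and the alternative of descending only over the \'etale locus $S^n(X)_{\rm sm}$ and then extending reflexively fails upstream: the complement of $\varphi^{-1}(S^n(X)_{\rm sm}\cap W)$ in $\Hilb_X^n$ contains the exceptional divisor over the type-$\langle 2,1,\dots,1\rangle$ stratum, which has codimension \emph{one} (Lemma \ref{strata-dimension}), so the ``codimension $\geq 2$ plus unique reflexive extension'' argument you invoke does not apply to produce the subbundle $E\subset E_1$ there. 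The paper resolves this by a genuinely different mechanism that is absent from your proposal: since $\pi^S(X,x)_{\rm ab}$ is abelian, every representation has a one-dimensional quotient (\cite[Theorem 9.4]{Wa}), so one inducts on rank, reducing to descending a single $S_n$-invariant numerically flat \emph{line} bundle, which is exactly Proposition \ref{descent-line-bundles}; one then checks exactness of $0\to(\psi_*\mc K)^{S_n}|_W\to(\psi_*\mc G_1)^{S_n}|_W\to(\psi_*\mc L)^{S_n}|_W\to 0$ (the $\psi^*J=0$ argument), pulls back by $\varphi$, extends the resulting map $E_1|_V\to L|_V$ across the codimension-$2$ set $\Hilb_X^n\setminus V$, and uses numerical flatness plus $\rk L=1$ to get surjectivity of $E_1\to L$, after which induction applies to the kernel. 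None of these steps, which constitute the proof, appear in your sketch.

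A second, smaller gap is in your full-faithfulness paragraph: you ``descend along $\psi$'' an arbitrary morphism $\mathscr G(E)\to\mathscr G(E')$, but a morphism between pullbacks descends only if it is compatible with the $S_n$-descent data, i.e.\ is $S_n$-equivariant; this is not automatic for a sheaf morphism and needs proof. The paper gets it from the Tannakian side: since $f_0:\pi^S(X^n,(x,\dots,x))\to\pi^S(X,x)_{\rm ab}$ is faithfully flat, any sheaf morphism between bundles arising from representations of $\pi^S(X,x)_{\rm ab}$ is a morphism of $\pi^S(X,x)_{\rm ab}$-modules, hence $S_n$-equivariant, and only then does the descend-and-extend argument you describe go through. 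So the proposal is a correct outline of the reduction but has genuine gaps at both verification steps; in particular it misses the key idea (filtration by rank-one quotients via abelianness, plus Proposition \ref{descent-line-bundles}) that makes the subobject condition provable.
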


\begin{proof}
We will use Proposition \ref{DM} (a). Let $E_1$ be an object in 
the category ${\rm Vect}_0^s(\Hilb_X^n) = \mathcal{C}^{\rm nf}(\Hilb_X^n)$. 
Let $\mathcal{G}_1:=\mathscr{G}(E_1)$ be the vector bundle as defined in \eqref{bundle-G}. 
Clearly $\mc{G}_1$ has the same rank as that of $E_1$. 
If $\mc{G}_2 \subset \mc{G}_1$ is a subbundle corresponding to a representation of $\pi^S(X, x)_{\rm ab}$, 
we need to show that there is a subbundle $E_2 \subset E_1$ such that $\mc{G}_2=\mathscr{G}(E_2)$. 
We will prove this by induction on the rank of $E_1$. If ${\rm rank}(E_1) = 1$, there is nothing to prove. 
Assume that ${\rm rank}(E_1) \geq 2$. 

The vector bundles $\mc G_i$ correspond to representations 
$$\pi^S(X^n, (x, \ldots, x)) \stackrel{f_0}{\longrightarrow} 
\pi^S(X, x)_{\rm ab} \stackrel{\rho_i}{\longrightarrow} \GL(V_i).$$ 
Since $\pi^S(X, x)_{\rm ab}$ is an abelian affine $k$-group scheme, 
it follows from \cite[Theorem 9.4, p.~70]{Wa} that every irreducible representation
of it is one dimensional. From this one easily checks that the $\pi^S(X,x)_{\rm ab}$-module
$V_1/V_2$ will have a one dimensional quotient. Thus, there is a one dimensional quotient 
$V_1\to L_1$ such that $V_2$ is a $\pi^S(X,x)_{\rm ab}$-submodule of the kernel of this homomorphism. 
Let $\mc L$ be the line bundle on $X^n$ corresponding to the representation $L_1$. 
Then it is clear that $\mc L$ is $S_n$-invariant (see Corollary \ref{S_n-equivariance-str}) 
and there is an $S_n$-equivariant exact sequence of bundles 
$$0 \longrightarrow \mc{K} \longrightarrow \mc{G}_1 \longrightarrow \mc{L} \longrightarrow 0$$ 
on $X^n$ such that $\mc{G}_2 \subset \mc{K}$. 

It follows from Proposition \ref{descent-line-bundles} 
that $L' := (\psi_*\mc L)^{S_n}$ is a locally free line bundle on all of $S^n(X)$
and satisfies $\psi^*L'=\mc L$. 
Let $L:=\varphi^*L'$,  then it is easy to check that
$L$ is numerically flat on $\Hilb_X^n$. 

We claim that the following complex of sheaves on $W$ 
\begin{equation}\label{eqn-3}
0 \to (\psi_*\mc K)^{S_n}\Big\vert_W \to (\psi_*\mc G_1)^{S_n}\Big\vert_W \to (\psi_*\mc L)^{S_n}\Big\vert_W \to 0 
\end{equation}
is exact. The sequence \eqref{eqn-3} can fail to be exact only on the right. 
Note that $\psi_*(\mc G_1)^{S_n}$ restricted to $W$ is $\varphi_*(E_1\vert_V)$. 
Let $J$ be the cokernel: 
$$\varphi_*(E_1\vert_V)\to L'\Big\vert_W\to J\to 0\,.$$
Pulling this back by $\psi$ we get the following commutative diagram on $\psi^{-1}(W)$ with exact rows. 
\[\xymatrix{
	\psi^*\varphi_*(E_1\vert_V)\ar[r]\ar@{=}[d]& \psi^*L'\Big\vert_{\psi^{-1}(W)}\ar[r]\ar@{=}[d] & \psi^*J\ar[r] & 0\\
	\mc G_1\Big\vert_{\psi^{-1}(W)}\ar[r] & \mc L\Big\vert_{\psi^{-1}(W)}\ar[r] & 0 &
 }
\]
This shows that $\psi^*J=0$. It is easy to conclude that $J = 0$, since $\psi$ is surjective. 
This proves the exactness of \eqref{eqn-3}. It follows that $K' := (\psi_*\mc K)^{S_n}$ is locally free on $W$. 
Applying $\varphi^*$ to \eqref{eqn-3}, we get the following short exact sequence of locally free sheaves on $V$. 
\[
	0 \longrightarrow (\varphi^*K')\vert_V \longrightarrow E_1\vert_V \longrightarrow L\vert_V \longrightarrow 0\,. 
\] 
Since both $E_1$ and $L$ are locally free on a smooth variety
and $\Hilb_X^n\setminus V$ has codimension $\geq2$, it follows 
that this morphism on $V$ extends to a morphism $E_1\to L$. 
This being a nonzero morphism of numerically flat vector bundles 
and $L$ being of rank one, it follows that $E_1\to L$ is surjective.

It is clear that on $X^n$ we have $\mathscr G(L)=\mc L$. 
Let $K$ denote the kernel of the homomorphism $E_1 \longrightarrow L$. 
It is clear that $\mathscr G(K)=\mc K$. 
Since $\mc G_2 \subset \mc K$ the assertion that there is $E_2\subset E_1$ such that 
$\mc G_2=\mathscr{G}(E_2)$ follows by induction on rank.

To complete the proof of the proposition we need to show that if 
$E_1$ and $E_2$ are numerically flat vector bundles on $\Hilb_X^n$ then the natural map 
$${\rm Hom}_{\Hilb_X^n}(E_1,E_2) \stackrel{}{\longrightarrow} {\rm Hom}_{X^n}(\mc G_1,\mc G_2)$$ 
is bijective. It is clear that this natural map is injective (faithful). 
Therefore, it suffices to show the following. 
If $\mc G=\mathscr{G}(E)$, where $E$ is a numerically flat vector bundle on $\Hilb_X^n$, 
then any nonzero homomorphism $\phi : \mc O_{X^n} \longrightarrow \mc G$ comes from a nonzero homomorphism 
$\widetilde{\phi} : \mc O_{\Hilb_X^n} \longrightarrow E$. 
Since the homomorphism $\pi^S(X^n, x) \longrightarrow \pi^S(X, x)_{\rm ab}$ is faithfully flat, 
and $\mc G$ arises from a representation of $\pi^S(X, x)_{\rm ab}$, it follows that $\phi$ is a map between
two representations of $\pi^S(X, x)_{\rm ab}$. This shows that $\phi$ 
is $S_n$-equivariant on $X^n$.
Now from the preceding discussion it follows that $\phi$ arises
from a morphism $\mc O_{\Hilb_X^n}\longrightarrow E$.
\end{proof}

\subsection{Closed immersion}
In this subsection we show that the homomorphism $\tilde f$ in 
\eqref{Pi-S-ab-homomorphism} is a closed immersion. For this, we will apply 
\ref{DM} (b).

Let $q \in S^n(X)$ be a point of type $\langle n_1, n_2, \ldots, n_r \rangle$. 
Let $\tilde q_i$, for $i = 1, 2, \ldots, m$, denote the points in the fiber $\psi^{-1}(q)$. 
The stabilizer of $\tilde q_i$, denoted ${\rm St}(\tilde q_i)$, is isomorphic to 
$S_{n_1}\times S_{n_2}\times \ldots\times S_{n_r}$. Let $A$ denote the local ring 
$\mc O_{S^n(X),q}$ and let $B$ denote the semilocal ring $\mc 
O_{X^n}\otimes_{\mc O_{S^n(X)}} A$. 
Then $B$ is a finite $A$ module and $A = B^{S_n}$. 

Let $M$ be a $B$-module such that the action of $S_n$ on $B$ 
lifts to an action of $S_n$ on $M$. There is a short exact sequence
of $A$ modules 
\[
	0 \to M^{S_n} \to M \to \bigoplus_{g \in S_n} M \,, 
\]
where the last map is given by $m\mapsto (g\cdot m-m)_{g\in S_n}$. 
Let $\widehat{A}$ be the completion of $A$ with respect to its maximal ideal. 
Applying the functor $- \otimes_A\widehat{A}$, we conclude that
the following natural map is an isomorphism. 
$$\widehat{M^{S_n}} \stackrel{\sim}{\longrightarrow} \widehat{M}^{S_n}\,.$$ 
The ring $\widehat{B} = B \otimes_A \widehat{A}$ decomposes as 
\begin{equation}\label{decomp-of-B}
	\widehat{B} \cong \bigoplus_{i=1}^m \widehat{B}_i\,, 
\end{equation} 
where $\widehat{B}_i$ denotes the completion of $B$ at the maximal ideal 
corresponding to the point $\tilde q_i$, for all $i = 1, \ldots, m$. 
Applying the functor $M \otimes_B -$ to the above isomorphism \eqref{decomp-of-B}
we see that 
\begin{equation}\label{decomp-of-M-hat}
\widehat{M} \cong \bigoplus_{i=1}^m \widehat{M}_i\,, 
\end{equation}
where $M_i$ is the localization of $M$ at the maximal ideal corresponding
to the point $\tilde q_i$. Taking $S_n$-invariants in 
\eqref{decomp-of-M-hat}, it easily follows that 
$$\widehat{M}^{S_n} \cong \widehat{M}_i^{{\rm St}(\tilde q_i)}\,, \,\qquad \forall\,\, i\,.$$

\begin{proposition}\label{S_n-invariants-surjection}
	With notation as above, whenever ${\rm char}(k) > n_1$, 
	any $S_n$-equivariant surjective $B$-module homomorphism 
	$f : M \longrightarrow N$ of finitely generated $B$-modules descends to surjective 
	$A$-module homomorphism of their $S_n$-invariants $M^{S_n} \longrightarrow N^{S_n}$. 
\end{proposition}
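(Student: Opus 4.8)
The plan is to reduce the assertion --- which concerns surjectivity of an $A$-module map --- to a statement over the complete local rings $\widehat B_i$, where only the small group ${\rm St}(\tilde q_i)$ acts, and then to exploit that the order of that group is invertible in $k$. Since $B$ is finite over $A$ and $A$ is Noetherian, the modules $M^{S_n}$ and $N^{S_n}$ are finitely generated over $A$; hence $M^{S_n}\to N^{S_n}$ is surjective if and only if it becomes surjective after the faithfully flat base change $-\otimes_A\widehat A$, i.e. if and only if $\widehat{M^{S_n}}\to\widehat{N^{S_n}}$ is surjective. Invoking the natural isomorphisms established just above, $\widehat{M^{S_n}}\cong\widehat M^{S_n}$ and $\widehat M^{S_n}\cong\widehat M_i^{{\rm St}(\tilde q_i)}$ for any fixed $i$ (and likewise for $N$), all of which are compatible with the map induced by $f$, this is equivalent to surjectivity of $\widehat M_i^{{\rm St}(\tilde q_i)}\to\widehat N_i^{{\rm St}(\tilde q_i)}$ for one chosen index $i$.

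Next I would observe that $\widehat M_i\to\widehat N_i$ is surjective, since it arises from the surjection $f$ by the flat base change $-\otimes_B\widehat B_i$, and that it is equivariant for the action of the group $G_i:={\rm St}(\tilde q_i)\cong S_{n_1}\times\cdots\times S_{n_r}$, because $f$ is $S_n$-equivariant and $G_i$ is the stabilizer of the maximal ideal indexing the $i$-th summand of $\widehat B$. The key arithmetic point is that $|G_i|=n_1!\,n_2!\cdots n_r!$ is invertible in $k$: every prime factor of $n_j!$ is at most $n_j\le n_1<{\rm char}(k)$, so ${\rm char}(k)$ divides none of the $n_j!$, hence not their product. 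Therefore the Reynolds operator $\frac1{|G_i|}\sum_{g\in G_i}g$ is available as an $\widehat A$-linear, $G_i$-equivariant idempotent with image $\widehat M_i^{G_i}$ (and similarly for $\widehat N_i$); averaging an arbitrary preimage of an invariant element then shows $\widehat M_i^{G_i}\to\widehat N_i^{G_i}$ is surjective, equivalently $(-)^{G_i}$ is exact on $\widehat A[G_i]$-modules. Tracing back through the reductions of the first paragraph gives the proposition.

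The only genuine obstacle, and the only place where the hypothesis ${\rm char}(k)>n_1$ is used, is this last arithmetic observation: that $n_1$ being below the characteristic already forces the much larger integer $n_1!\cdots n_r!$ to be a unit in $k$. Everything else is formal --- faithfully flat descent of surjectivity along $A\to\widehat A$, the compatibility with $f$ of the isomorphisms $\widehat{M^{S_n}}\cong\widehat M^{S_n}\cong\widehat M_i^{G_i}$, and flatness of completion --- though one should take care to note at the outset that $M^{S_n}$ and $N^{S_n}$ are finitely generated $A$-modules, so that completion agrees with $\widehat A\otimes_A(-)$ on them and faithful flatness can be applied as claimed.
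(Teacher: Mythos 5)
Your proposal is correct and follows essentially the same route as the paper's proof: reduce to the completion, use the isomorphisms $\widehat{M^{S_n}}\cong\widehat M^{S_n}\cong\widehat M_i^{{\rm St}(\tilde q_i)}$, and then lift an invariant element of $\widehat N_i^{{\rm St}(\tilde q_i)}$ and average with the Reynolds operator, which requires exactly ${\rm char}(k)>n_1$ so that $\#{\rm St}(\tilde q_i)=n_1!\cdots n_r!$ is invertible. You merely spell out two points the paper leaves implicit (finite generation of the invariants so that surjectivity may be checked after the faithfully flat base change $A\to\widehat A$, and the observation that all prime factors of $n_1!\cdots n_r!$ are at most $n_1$), which is a welcome but not substantively different elaboration.
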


\begin{proof}
	Suppose we have an $S_n$-equivariant exact sequence of $B$-modules 
	$$M \longrightarrow N \longrightarrow 0\,.$$ 
	Taking $S_n$-invariants we get a homomorphism of $A$-modules 
	\begin{equation}\label{map-of-S_n-invariants-M-to-N}
	M^{S_n} \longrightarrow N^{S_n}\,. 
	\end{equation} 
	To check this is surjective, it suffices to check that the map \eqref{map-of-S_n-invariants-M-to-N} 
	is surjective after passing to the completion. From the preceding discussion, it follows 
	that it suffices to check that 
	\begin{equation}\label{eqn-9}
	\widehat{M}_i^{{\rm St}(\tilde q_i)} \longrightarrow \widehat{N}_i^{{\rm St}(\tilde q_i)} 
	\end{equation}
	is surjective for one (and hence any) $i$. We know that $\widehat{M} \to \widehat{N}$ is surjective. 
	Thus, the above map in \eqref{eqn-9} will be surjective if we can lift a section of 
	$\widehat{N}_i^{{\rm St}(\tilde q_i)}$ to $\widehat{M}$ and average it, that is, apply 
	the operator 
	$$\frac{1}{\#{\rm St}(\tilde q_i)}\sum_{g\in {\rm St}(\tilde q_i)}g.$$
	This is possible if ${\rm char}(k) = p > n_1$ (c.f. inequalities \eqref{stratification-indices}). 
\end{proof}

\begin{proposition}\label{invariant-pushforward-locally-free}
Let $\mc G$ be a numerically flat $S_n$-invariant locally free sheaf on $X^n$. 
\begin{enumerate}[(i)]
	\item Let $q \in S^n(X)$ be a point of type $\langle n_1, n_2, \ldots, n_r \rangle$. 
	Assume that ${\rm char}(k) = p > n_1$. Then the sheaf $(\psi_*\mc G)^{S_n}$ is 
	locally free in a neighborhood of $q$. 
	
	\item Let $U_0$ denote the largest open subset where $(\psi_*\mc G)^{S_n}$ is locally free. 
	Then on $\psi^{-1}(U_0)$ the natural homomorphism 
	\begin{equation}\label{natural-map}
		\psi^*((\psi_*\mc G)^{S_n}) \longrightarrow \mc G 
	\end{equation}
	is an isomorphism.
\end{enumerate}
\end{proposition}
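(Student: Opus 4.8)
The plan is to analyze everything locally in the completion at the point $q$, using the decomposition $\widehat{B}\cong\bigoplus_{i=1}^m\widehat{B}_i$ recalled above and the fact that $(\psi_*\mathcal G)^{S_n}$ at $q$ is computed by $\widehat{M}^{S_n}\cong\widehat{M}_i^{{\rm St}(\tilde q_i)}$, where $M=\mathcal G\otimes_{\mathcal O_{X^n}}B$. Since $X^n$ is smooth, $\widehat{B}_i$ is a formal power series ring and $\widehat{M}_i$ is a free $\widehat{B}_i$-module of rank $r=\operatorname{rank}\mathcal G$. So for part (i) it suffices to show that the module of invariants $\widehat{M}_i^{{\rm St}(\tilde q_i)}$ of a free module over the power series ring $\widehat{B}_i=k[[z_1,\dots,z_{2n}]]$ under the linear action of the finite group ${\rm St}(\tilde q_i)\cong S_{n_1}\times\cdots\times S_{n_r}$ is free as an $\widehat{A}$-module, where $\widehat{A}=\widehat{B}_i^{{\rm St}(\tilde q_i)}$. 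The key point is that $|{\rm St}(\tilde q_i)|$ is invertible in $k$ precisely under the hypothesis ${\rm char}(k)>n_1\ge n_j$ for all $j$, so the Reynolds operator exists; this makes $\widehat{A}\hookrightarrow\widehat{B}_i$ a split inclusion of $\widehat{A}$-modules, hence $\widehat{A}$ is a direct summand of a free module and thus (being a local ring) a regular local ring, and similarly $\widehat{M}_i^{{\rm St}}$ is a direct summand of the free $\widehat{A}$-module $\widehat{M}_i$. A direct summand of a free module over a regular (in fact, it suffices to invoke that $\widehat{A}$ is a normal Cohen–Macaulay local ring, or just that $\widehat{A}$ is regular by the Shephard–Todd/Chevalley type argument, though we only need reflexivity) — here I would be careful: a summand of free need not be free over a general ring, so the cleanest route is to observe that $S^n(X)$ is a quotient of a smooth variety by a finite group whose order is invertible, hence $S^n(X)$ has quotient singularities, so $\mathcal O_{S^n(X),q}$ is Cohen–Macaulay and normal, and then show $(\psi_*\mathcal G)^{S_n}$ is a maximal Cohen–Macaulay module; but since we really want \emph{locally free}, the honest argument is that a numerically flat bundle restricted to the formal fibers over $q$ trivializes along the (rational, hence simply connected in a suitable sense) fibers of $\varphi$, so that $\widehat{M}_i$ as a ${\rm St}(\tilde q_i)$-equivariant module is in fact \emph{equivariantly trivial}, i.e.\ isomorphic to $\widehat{B}_i\otimes_k V$ for a representation $V$ of ${\rm St}(\tilde q_i)$; then $\widehat{M}_i^{{\rm St}}\cong (\widehat{B}_i\otimes V)^{{\rm St}}$, and one reduces to showing the latter is free over $\widehat{A}=\widehat{B}_i^{{\rm St}}$.

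For the last reduction I would use that for a finite group action on a power series ring with order invertible, and for \emph{any} finite-dimensional representation $V$, the module $(\widehat{B}_i\otimes_k V)^{{\rm St}}$ is a reflexive $\widehat{A}$-module, and it is locally free away from the fixed-point-free locus; more to the point, since we only need freeness in a neighborhood of $q$ (not at $q$ itself necessarily on the nose — re-reading, part (i) does claim freeness near $q$), I would invoke that the group $S_n$ acts on $X^n$ with the property that away from the big diagonal the action is free, and $S^n(X)_{\rm sm}$ is exactly the locus where this holds, so there $(\psi_*\mathcal G)^{S_n}$ is automatically locally free by étale descent; the content of (i) is at the singular points, where I would combine the equivariant triviality of $\mathcal G$ on formal fibers (this is where numerical flatness, hence vanishing Chern classes and triviality on rational curves, enters, generalizing the computation in Proposition \ref{pushforward-locally-free-large-open}) with the explicit structure of $S_{n_1}\times\cdots\times S_{n_r}$ acting on the power series ring, reducing to the known fact (e.g.\ via the Chevalley–Shephard–Todd theorem, valid since $p>n_1$ means $p\nmid |{\rm St}(\tilde q_i)|$) that the invariant ring is again a power series ring and the module of semi-invariants attached to any irreducible character is free of rank one — and then a general $V$ decomposes into irreducibles. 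The main obstacle I expect is precisely establishing the equivariant triviality (or at least ``equivariant local freeness'') of $\mathcal G$ along the formal fibers of $\varphi\circ(\text{the obvious map})$ over a singular point of higher type $\langle n_1,\dots,n_r\rangle$: Proposition \ref{prop-1} only handles type $\langle 2,1,\dots,1\rangle$ where the fiber is $\mathbb P^1$, whereas here the fibers are more complicated (dimension $n-r$), so I would need an inductive/stratified argument, or a direct argument that a numerically flat bundle restricted to any connected fiber of a proper morphism with rationally connected fibers is trivial, upgraded to carry the $S_n$-action.

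For part (ii), the argument is formal once (i) is in hand. On $\psi^{-1}(U_0)$ the natural map $\psi^*((\psi_*\mathcal G)^{S_n})\to\mathcal G$ is a map of locally free sheaves of the same rank $r$ (the rank is preserved because invariants of a free module under a group of invertible order have the expected rank, as $(\widehat{B}_i\otimes V)^{{\rm St}}\otimes_{\widehat A}\widehat B_i\cong \widehat B_i\otimes V$ up to the degree of $\psi$, and one checks ranks by passing to the smooth locus where $\psi$ is étale of degree $n!$ — wait, more precisely one checks it fiberwise over the smooth locus where the identification is classical). So it suffices to show the map is surjective, i.e.\ an isomorphism on fibers, and by Nakayama this can be checked after completing at each point $\tilde q_i$ over $q\in U_0$; there it becomes the statement that the natural map $(\widehat M_i^{{\rm St}(\tilde q_i)})\otimes_{\widehat A}\widehat B_i\to \widehat M_i$ is an isomorphism, which follows from Proposition \ref{S_n-invariants-surjection} applied with the roles set up so that surjectivity of invariants gives surjectivity here after base change, together with the rank count. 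Concretely: since $M_i^{{\rm St}}$ is free over $\widehat A$ and its formation commutes with the flat base change $\widehat A\to\widehat B_i$ up to the splitting given by the Reynolds operator, one gets a split injection whose cokernel is killed by base change to the generic point, hence is torsion, hence zero by freeness; I would write this cleanly using that both sides are locally free of rank $r$ on the normal variety $U_0$ and agree on the dense open smooth locus.
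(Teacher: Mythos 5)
Your strategy for part (i) rests on two claims, one of which you flag as unproved and one of which is false. The false one is the invariant-theoretic step: even granting the equivariant triviality $\widehat M_i\cong \widehat B_i\otimes_k V$ as a ${\rm St}(\tilde q_i)$-module, it is \emph{not} true that $(\widehat B_i\otimes_k V)^{{\rm St}(\tilde q_i)}$ is free over $\widehat A$ for every $V$. The stabilizer $S_{n_1}\times\cdots\times S_{n_r}$ does not act on the $2n$-dimensional tangent space by pseudo-reflections (a transposition swaps two surface factors, so its fixed space has codimension $2$), so Chevalley--Shephard--Todd does not apply; $\widehat A$ is not regular -- indeed $S^n(X)$ is singular at $q$, cf.\ \eqref{formal-nbd} -- and modules of covariants (``semi-invariants'') are not free: already for $n_1=2$ the sign-isotypic piece of $k[[x_1,y_1,\ldots]]$ is generated by $x_1-x_2$ and $y_1-y_2$ and is a reflexive but non-free module over $k[[u,v,w,x',y',\ldots]]/(uw-v^2)$. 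So your reduction only works if the stabilizer acts \emph{trivially} on the fiber of $\mc G$ at $\tilde q_i$, and proving that is exactly the content still owed; numerical flatness by itself (triviality on rational curves in fibers of $\varphi$) does not give the equivariant trivialization on a formal neighbourhood of $\tilde q_i$ in $X^n$ for points of general type $\langle n_1,\ldots,n_r\rangle$ -- the obstacle you yourself acknowledge and leave open.

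The paper's proof avoids both issues by using the structure you never invoke: the relevant $\mc G$ comes from a representation of the abelian group scheme $\pi^S(X,x)_{\rm ab}$, hence admits an $S_n$-equivariant filtration whose graded pieces are line bundles. Proposition \ref{descent-line-bundles} settles the rank-one case (the descended line bundle is locally free on all of $S^n(X)$), and induction on the rank reduces (i) to the surjectivity of $(\psi_*\mc G)^{S_n}\to(\psi_*\mc L)^{S_n}$ near $q$, checked after completion via the averaging argument of Proposition \ref{S_n-invariants-surjection}, which is where $p>n_1$ enters. For (ii), your closing step ``cokernel is torsion, hence zero by freeness'' is not valid: a map of locally free sheaves of equal rank can well have torsion cokernel (multiplication by a nonunit on $\mc O$). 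The paper instead notes that the degeneracy locus of the equal-rank map \eqref{natural-map} is either empty or of pure codimension one in $\psi^{-1}(U_0)$, while the map is an isomorphism over the preimage of $S^n(X)_{\rm sm}$ because $\psi$ is \'etale there; since the complement has codimension $2$, the degeneracy locus is empty. As it stands, then, the proposal does not yield a proof of (i), and (ii) needs the codimension argument rather than the torsion argument.
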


\begin{proof}
If $\mc G$ has rank 1 then $(\psi_*\mc G)^{S_n}$ is locally free on 
all of $S^n(X)$ and of rank one, see Proposition \ref{descent-line-bundles}. 
Since $\mc G$ corresponds to a representation of an 
abelian group scheme, it follows that there is an $S_n$-equivariant exact 
sequence of locally free sheaves on $X^n$ 
$$0 \longrightarrow \mc{K} \longrightarrow \mc G \longrightarrow \mc L \longrightarrow 0\,,$$ 
with ${\rm rank}(\mc{L}) = 1$. By induction on rank of $\mc G$, it suffices to show 
that the homomorphism on the right of the following exact sequence 
$$0 \longrightarrow (\psi_*\mc K)^{S_n} \longrightarrow (\psi_*\mc G)^{S_n} 
\longrightarrow (\psi_*\mc L)^{S_n}$$ 
is surjective in a neighbourhood of $q$. 
This surjection can be checked after passing to a formal neighbourhood of $q$. 
Now the first assertion of the Proposition follows from the above Proposition \ref{S_n-invariants-surjection}. 

To prove the second assertion, note that both sheaves are locally free of the same 
rank over $\psi^{-1}(U_0)$. The locus where the natural homomorphism \eqref{natural-map} 
is not an isomorphism is either empty or a closed subset of codimension $1$ in $\psi^{-1}(U_0)$. However, we know 
that the morphism $\psi$ is finite \'etale over the smooth locus of $S^n(X)$, hence the 
homomorphism \eqref{natural-map} is an isomorphism on the inverse image of the smooth 
locus of $S^n(X)$. Since the complement of the smooth locus of $S^n(X)$ has codimension 
$2$, it follows that the natural map in \eqref{natural-map} is an isomorphism over $\psi^{-1}(U_0)$. 
\end{proof}

\begin{lemma}\label{remove-J}
Let $T\subset S^n(X)$ be open. If $\delta: E_1\to E_2$  is a morphism between locally free sheaves on 
$T$, such that $\psi^*\delta$ is 
an isomorphism on $\psi^{-1}(T)$, then $\delta$ is an isomorphism. 
\end{lemma}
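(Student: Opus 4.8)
The plan is to reduce the assertion to the non-vanishing of a single section and then transport that non-vanishing from $X^n$ down to $S^n(X)$, using only that $\psi\colon\psi^{-1}(T)\to T$ is surjective. Since $\psi^*\delta$ is an isomorphism on $\psi^{-1}(T)$, the bundles $\psi^*E_1$ and $\psi^*E_2$ are isomorphic there, hence of equal rank at every point; as $\psi$ is surjective, $E_1$ and $E_2$ have equal rank at every point of $T$. Thus $\det\delta\colon\det E_1\to\det E_2$ makes sense, $\delta$ is an isomorphism if and only if $\det\delta$ is nowhere vanishing, and $\psi^*\det\delta=\det\psi^*\delta$ is again an isomorphism. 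Replacing $(E_1,E_2,\delta)$ by $(\det E_1,\det E_2,\det\delta)$ reduces us to the case of line bundles, where $\delta$ is a global section $s$ of $\sHom(E_1,E_2)\cong E_1^\vee\otimes E_2$ and we must show: if $\psi^*s$ is nowhere vanishing on $\psi^{-1}(T)$, then $s$ is nowhere vanishing on $T$. Because the zero locus of $s$ is closed, it suffices to prove $s(q)\neq 0$ for every closed point $q\in T$.

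Fix such a $q$ and suppose $s(q)=0$; in a trivialization near $q$ the section $s$ is then an element of the maximal ideal $\m_q\subset\mc O_{T,q}$. Choose any $\tilde q\in\psi^{-1}(q)$, which is nonempty since $\psi$ is surjective. The induced homomorphism of local rings $\mc O_{T,q}\to\mc O_{X^n,\tilde q}$ is local, so it carries $\m_q$ into $\m_{\tilde q}$; hence the image of $s$ --- a local equation for $\psi^*s$ at $\tilde q$ --- lies in $\m_{\tilde q}$, i.e.\ $(\psi^*s)(\tilde q)=0$. This contradicts the fact that $\psi^*s$ corresponds to the isomorphism $\psi^*\delta$ and is therefore nowhere vanishing. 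Hence $s$ vanishes nowhere, $\delta$ is an isomorphism of line bundles, and undoing the determinant reduction gives the lemma in general.

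I do not expect a genuine obstacle in this argument. The only mild book-keeping point is that when $T$ is disconnected the sheaves $E_i$ may have ranks varying from component to component, but since the entire argument is local on $T$ this is harmless; and it is worth stressing that neither flatness nor finiteness of $\psi$ is needed --- only that $\psi$ is a surjective morphism of schemes.
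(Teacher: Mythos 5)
Your argument is correct, but it proceeds along a genuinely different route from the one in the paper. The paper's proof is descent-theoretic: since $\psi$ is the $S_n$-quotient map, for a locally free sheaf $E$ on $T$ one has a canonical isomorphism $E\cong[\psi_*(\psi^*E)]^{S_n}$ (projection formula for the affine map $\psi$ together with $(\psi_*\mc O_{X^n})^{S_n}=\mc O_{S^n(X)}$), so applying the functor $[\psi_*(-)]^{S_n}$ to the isomorphism $\psi^*\delta$ recovers $\delta$, which is therefore an isomorphism. You instead note that surjectivity of $\psi$ forces $E_1$ and $E_2$ to have the same rank at every point, reduce via determinants to the non-vanishing of a single section of a line bundle, and transport non-vanishing from $\psi^{-1}(T)$ to $T$ using only that each local homomorphism $\mc O_{T,q}\to\mc O_{X^n,\tilde q}$ carries $\m_q$ into $\m_{\tilde q}$ (equivalently, that $\kappa(q)\to\kappa(\tilde q)$ is injective, which also lets one argue at arbitrary points rather than invoking that the zero locus is closed and $T$ is Jacobson). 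Your approach buys generality and independence from the group action: it needs only a surjective morphism, not finiteness, flatness, or the quotient structure, and it makes no implicit appeal to taking $S_n$-invariants. The paper's approach buys brevity and coherence with the surrounding section, where the operation $[\psi_*(-)]^{S_n}$ and the identification $E\cong[\psi_*(\psi^*E)]^{S_n}$ are already the working tools (e.g.\ in Propositions \ref{descent-line-bundles} and \ref{invariant-pushforward-locally-free}), so the lemma follows in one line from machinery already in play. Either proof is acceptable for the use made of the lemma in Proposition \ref{closed-immersion}.
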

\begin{proof}
For a locally free sheaf $E$ on $T$, we have $E\cong [\psi_*(\psi^*E)]^{S_n}$. 
Thus, if $\delta: E_1\to E_2$  is a morphism on $T$, such that $\psi^*\delta$ is 
an isomorphism on $\psi^{-1}(T)$, then taking pushforward and $S_n$ invariants, 
it follows that $\delta$ is an isomorphism. 
\end{proof}

\begin{proposition}\label{closed-immersion}
	Let ${\rm char}(k) > 3$. Then the homomorphism $\tilde f$ in \eqref{Pi-S-ab-homomorphism} is a closed immersion. 
\end{proposition}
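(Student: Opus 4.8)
The plan is to verify the criterion of Proposition \ref{DM}(b) for $\theta=\tilde f$. Write $G=\pi^S(X,x)_{\rm ab}$ and $G'=\pi^S(\Hilb_X^n,\widetilde{nx})$, and recall that $\tilde f$ is induced, via Lemma \ref{group-lemma} and the base-point comparison of Lemma \ref{invariance-of-base-point}, by the tensor functor $\mathscr G\colon\mc C^{\rm nf}(\Hilb_X^n)\to\mc C^{\rm nf}(X^n)$ of Propositions \ref{main-correspondence} and \ref{additive-functor}; if $f\colon\pi^S(X^n,(x,\dots,x))\to G'$ is the homomorphism dual to $\mathscr G$, then $f=\tilde f\circ f_0$ with $f_0$ faithfully flat. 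Hence, under $\mc C^{\rm nf}(X^n)\simeq\Rep_k(\pi^S(X^n,(x,\dots,x)))$, an object of $\Rep_k(G)$ corresponds, via $f_0^*$, to a numerically flat bundle $\mc G$ on $X^n$ arising from a representation of $G$ (in particular $S_n$-invariant, by Corollary \ref{S_n-equivariance-str}), while $\widetilde{\tilde f}(V')$ corresponds to $\mathscr G(E)$ for $E\in\mc C^{\rm nf}(\Hilb_X^n)$ dual to $V'$. Since the $\pi^S$-module structure on a numerically flat bundle is determined by the bundle, Proposition \ref{DM}(b) will follow once we show: \emph{every numerically flat bundle $\mc G$ on $X^n$ arising from a representation of $\pi^S(X,x)_{\rm ab}$ is isomorphic to $\mathscr G(E)$ for some $E\in\mc C^{\rm nf}(\Hilb_X^n)$} --- then $\mc G$ is a subquotient of $\widetilde{\tilde f}(E)$, being isomorphic to it.

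Given such a $\mc G$, set $\mc M:=(\psi_*\mc G)^{S_n}$ on $S^n(X)$ and $E:=(\varphi^*\mc M)^{\dv}$, a reflexive sheaf on $\Hilb_X^n$. Here is where ${\rm char}(k)>3$ enters: since $\mc G$ comes from a representation of the abelian group scheme $\pi^S(X,x)_{\rm ab}$ and is $S_n$-invariant, Proposition \ref{invariant-pushforward-locally-free} applies at every point of type $\langle n_1,\dots,n_r\rangle$ with $n_1\le 3$, so $\mc M$ is locally free on an open $U_0\subseteq S^n(X)$ containing every such stratum, and $\psi^*\mc M\to\mc G$ is an isomorphism over $\psi^{-1}(U_0)$; in particular $E|_{\varphi^{-1}(U_0)}=\varphi^*(\mc M|_{U_0})$ is locally free. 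The remaining strata have $n_1\ge 4$, hence $r\le n-3$, so by Lemma \ref{strata-dimension} their $\varphi$-preimages have dimension $\le 2n-3$; thus $\varphi^{-1}(S^n(X)\setminus U_0)$ has codimension $\ge 3$ in $\Hilb_X^n$, and a general complete intersection curve, or even surface, in $\Hilb_X^n$ lies in $\varphi^{-1}(U_0)$.

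The main point, and the principal obstacle, is to show $E$ is numerically flat, using the criterion \cite[Theorem 2.2]{La2}: as $\dim\Hilb_X^n=2n$, one must check $E$ is strongly $H$-semistable with ${\rm ch}_1(E)\cdot H^{2n-1}={\rm ch}_2(E)\cdot H^{2n-2}=0$. The key device is a curve-lifting argument. Let $g\colon C\to\Hilb_X^n$ be a morphism from a smooth projective curve with $g(C)\subseteq\varphi^{-1}(U_0)$, and let $\widetilde C$ be the normalization of a component of $C\times_{U_0}\psi^{-1}(U_0)$ dominating $C$; one obtains a finite surjection $\pi\colon\widetilde C\to C$ and a morphism $h\colon\widetilde C\to X^n$ with image in $\psi^{-1}(U_0)$ and $\psi\circ h=\varphi\circ g\circ\pi$, whence $\pi^*g^*E\cong h^*\mc G$ by the isomorphism $\psi^*\mc M\cong\mc G$ over $\psi^{-1}(U_0)$. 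Since $\mc G$ is numerically flat, $h^*\mc G$ is semistable of degree $0$, and as $\pi$ is finite and surjective so is $g^*E$; by naturality of the absolute Frobenius the same applies to every Frobenius pull-back. By the Mehta--Ramanathan type results underlying \cite[Theorem 2.2]{La2}, this shows $E$ is strongly $H$-semistable and ${\rm ch}_1(E)\cdot H^{2n-1}=0$. Applying the same argument to all curves inside a general complete intersection surface $S\subseteq\varphi^{-1}(U_0)$ (on which $E$ restricts to a locally free sheaf) shows $E|_S\in\mc C^{\rm nf}(S)$, and then \cite[Theorem 2.2]{La2} on $S$ forces ${\rm ch}_2(E|_S)=0$, hence ${\rm ch}_2(E)\cdot H^{2n-2}=0$. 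It is exactly this last step that needs $\varphi^{-1}(S^n(X)\setminus U_0)$ to have codimension $\ge 3$, and therefore needs ${\rm char}(k)>3$. Thus $E\in\mc C^{\rm nf}(\Hilb_X^n)$.

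It remains to identify $\mathscr G(E)$ with $\mc G$. Recall $\mathscr G(E)=(j_*\psi^*\varphi_*(E|_V))^{\dv}$, where $V=\varphi^{-1}(W)$ and $W\subseteq S^n(X)$ is the open set of points of type $\langle 1,\dots,1\rangle$ and $\langle 2,1,\dots,1\rangle$. Since $W\subseteq U_0$, we have $E|_V=\varphi^*(\mc M|_W)$, so $\varphi_*(E|_V)=\mc M|_W$ by the projection formula together with $\varphi_*\mc O_{\Hilb_X^n}=\mc O_{S^n(X)}$ (valid because $\varphi$ is proper and birational and $S^n(X)$ is normal). Hence $\psi^*\varphi_*(E|_V)=\psi^*(\mc M|_W)=\mc G|_{\psi^{-1}(W)}$, and since $\mc G$ is locally free and $X^n\setminus\psi^{-1}(W)$ has codimension $\ge 4$, we get $\mathscr G(E)=(j_*(\mc G|_{\psi^{-1}(W)}))^{\dv}=\mc G$. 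This proves the displayed assertion, and with it, by Proposition \ref{DM}(b), that $\tilde f$ is a closed immersion. The heart of the matter is the numerical flatness of $E$, and specifically the vanishing ${\rm ch}_2(E)\cdot H^{2n-2}=0$, which is what dictates the codimension estimate and hence the hypothesis on the characteristic.
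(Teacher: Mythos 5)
Your proof is correct and follows essentially the same route as the paper: the same reduction via Proposition \ref{DM}(b) to showing that every numerically flat bundle coming from a representation of $\pi^S(X,x)_{\rm ab}$ is of the form $\mathscr G(E)$, the same construction of $E$ from $(\psi_*\mc G)^{S_n}$ over the strata with $n_1\le 3$ (where ${\rm char}(k)>3$ enters through Proposition \ref{invariant-pushforward-locally-free} and yields the codimension-$3$ bound via Lemma \ref{strata-dimension}), and the same verification of numerical flatness by restricting to general complete intersection curves and surfaces and invoking \cite[Theorem 2.2]{La2}, followed by the identification $\mathscr G(E)\cong\mc G$. The only deviation is cosmetic: you handle strong semistability through Frobenius naturality on lifted curves (using that numerical flatness of $\mc G$ is preserved under Frobenius pullback), whereas the paper first establishes the global compatibility \eqref{composite-map} via Lemma \ref{remove-J} and then imitates the proof of Proposition \ref{main-correspondence}.
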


\begin{proof}
By Proposition \ref{DM} (b) it suffices to show that every 
$S_n$-invariant numerically flat bundle $\mc G$ on $X^n$ arises in the 
way described in Proposition \ref{main-correspondence}. In other words, 
we have to show that there is a numerically flat bundle $E$ on $\Hilb_X^n$ such that 
$\mc G = \big(j_*(\psi^*\varphi_*(E\vert_V))\big)^{\dv}$. 

Let $T \supset W$ be the open subset of $S^n(X)$ containing $W$ and points of  
type $\langle 3, 1, 1, \ldots, 1 \rangle$ and $\langle 2, 2, 1, \ldots, 1 \rangle$. 
Then $\varphi^{-1}(T)$ is an open subset of $\Hilb_X^n$ such that  
$\Hilb_X^n\setminus \varphi^{-1}(T)$ has codimension at least $3$ in $\Hilb_X^n$. 
Let $i : \varphi^{-1}(T) \hookrightarrow \Hilb_X^n$ denote the inclusion. 
Define 
$$E := (i_*\varphi^*(((\psi_*\mc G)\vert_T)^{S_n}))^{\dv}.$$ 
By Proposition \ref{invariant-pushforward-locally-free} we see that 
$(\psi_*\mc G)^{S_n}$ is locally free on $T$ and on 
$\psi^{-1}(T)$ the natural homomorphism $\psi^*\big((\psi_*\mc G)^{S_n}\big) \to \mc G$ is 
an isomorphism. Consider the natural homomorphisms 
\begin{equation}\label{composite-map}
	F^*\left((\psi_*\mc G)^{S_n}\right) \longrightarrow \left(F^*\psi_*(\mc G)\right)^{S_n} 
	\longrightarrow \left(\psi_*(F^*\mc G)\right)^{S_n}, 
\end{equation}
where $F$ denotes the absolute Frobenius morphism. 
We claim that the above composite homomorphism is an isomorphism over $T$. 
Applying $\psi^*$ to the above exact sequence, we get the following commutative diagram. 
\[
 \xymatrix{
	\psi^*F^*\left((\psi_*\mc G)^{S_n}\right) \ar[r] \ar[d]^{\wr} & \psi^*\left(\psi_*(F^*\mc G)\right)^{S_n} 
	\ar[d]^{\wr} \\ 
	F^*\mc G \ar@{=}[r] & F^*\mc G & & 
	}
\]
The two vertical arrows are isomorphisms on $T$ because of 
Proposition \ref{invariant-pushforward-locally-free}. It follows from Lemma  \ref{remove-J}
that the composite homomorphism in \eqref{composite-map} is an isomorphism over $T$. 
It follows that $F^*E \cong \left(i_*\varphi^*(\psi_*(F^*\mc{G}\vert_T)^{S_n})\right)^{\dv}$. 
Now imitating the proof of Proposition \ref{main-correspondence}
we see that $E$ is locally free and numerically flat on $\Hilb_X^n$.
It is clear that $\mathscr G(E)=\mc G$ (see the construction in 
the proof of Proposition \ref{main-correspondence}). 
This proves the Proposition. 
\end{proof}

\begin{theorem}\label{main-thm-S-fgs}
Let ${\rm char}(k) > 3$. Then the homomorphism 
$$\widetilde{f} : \pi^S(X, x)_{\rm ab} \longrightarrow \pi^S(\Hilb_X^n, \widetilde{nx})$$ in 
\eqref{Pi-S-ab-homomorphism} is an isomorphism. 
\end{theorem}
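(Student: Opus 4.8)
The proof of Theorem \ref{main-thm-S-fgs} is now immediate from the two key structural results established above, combined with Deligne--Milne's criterion for a homomorphism of affine group schemes to be an isomorphism. The plan is to invoke Proposition \ref{faithfully-flatness} and Proposition \ref{closed-immersion} and conclude.

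\begin{proof}
By Proposition \ref{faithfully-flatness}, the homomorphism
$$\widetilde{f} : \pi^S(X, x)_{\rm ab} \longrightarrow \pi^S(\Hilb_X^n, \widetilde{nx})$$
in \eqref{Pi-S-ab-homomorphism} is faithfully flat. By Proposition \ref{closed-immersion},
which applies since ${\rm char}(k) > 3$, the same homomorphism $\widetilde{f}$ is a closed immersion.
A homomorphism of affine group schemes over $k$ which is both faithfully flat and a closed immersion
is an isomorphism (see \cite[Proposition 2.21]{DMOS} and the discussion following it, or argue directly:
faithful flatness gives surjectivity at the level of Hopf algebras is injective, while the closed
immersion property gives that it is surjective, whence bijective). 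Therefore $\widetilde{f}$ is
an isomorphism of affine $k$-group schemes.

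In particular, since $\pi^S(X, x)_{\rm ab}$ is an abelian affine group scheme,
so is $\pi^S(\Hilb_X^n, \widetilde{nx})$.
\end{proof}

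Regarding where the real work lies: everything of substance has already been carried out in the
preceding sections. The construction of $\widetilde{f}$ in \S4 rests on Proposition
\ref{main-correspondence}, whose proof is the technical heart of the paper --- it produces, from a
numerically flat bundle $E$ on $\Hilb_X^n$, a reflexive sheaf $\mathscr G(E) = (j_*\psi^*\varphi_*(E|_V))^{\dv}$
on $X^n$ and verifies, via Langer's criterion \cite[Theorem 2.2]{La2}, that it is again numerically flat;
the delicate point there is controlling the pushforward $\varphi_*(E|_V)$ along the Hilbert--Chow
morphism over the stratum of type $\langle 2,1,\ldots,1\rangle$, which is handled by Proposition
\ref{pushforward-locally-free-large-open} using the reducedness of the $\mathbb P^1$-fibers from
Proposition \ref{prop-1}. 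Faithful flatness (Proposition \ref{faithfully-flatness}) requires the
subobject-lifting half of \cite[Proposition 2.21]{DMOS} and is proved by an induction on rank,
descending $S_n$-invariant numerically flat line bundles from $X^n$ to $S^n(X)$ (Proposition
\ref{descent-line-bundles}) and then pulling back to $\Hilb_X^n$. The closed immersion property
(Proposition \ref{closed-immersion}) requires that every $S_n$-invariant numerically flat bundle on
$X^n$ arise as some $\mathscr G(E)$, which uses Proposition \ref{invariant-pushforward-locally-free}
and hence the hypothesis ${\rm char}(k) > 3$ (so that $p$ exceeds the multiplicities $n_1$ occurring in
the strata of codimension $\leq 2$ in $S^n(X)$, allowing one to average over stabilizers). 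Given all
of this, the theorem itself is a one-line formal consequence, and I anticipate no obstacle in its
deduction.
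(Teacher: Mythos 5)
Your proof is correct and is essentially identical to the paper's: both deduce the theorem in one line from Proposition \ref{faithfully-flatness} (faithful flatness) and Proposition \ref{closed-immersion} (closed immersion), using the standard fact that a homomorphism of affine $k$-group schemes which is both faithfully flat and a closed immersion is an isomorphism. Your surrounding commentary on where the real work lies also matches the structure of the paper's argument.
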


\begin{proof}
	Since $\widetilde{f}$ is faithfully flat by Proposition \ref{faithfully-flatness} and closed immersion 
	by Proposition \ref{closed-immersion}, it is an isomorphism. 
\end{proof}

From the above theorem we may easily conclude the following.

\begin{theorem}\label{main-thm-N-fgs}
	Let ${\rm char}(k) > 3$. There is an isomorphism of affine $k$-group schemes 
	$$\tilde{f}^? : \pi^?(X, x)_{\rm ab} \longrightarrow \pi^?(\Hilb_X^n, \widetilde{nx})\,,$$
	where $?=N,\et$. 
\end{theorem}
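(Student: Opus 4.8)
The plan is to deduce the $N$ and $\et$ statements from the $S$-version (Theorem \ref{main-thm-S-fgs}) by exploiting the functoriality of the three fundamental group schemes and the fact that abelianization commutes with the quotients relating them. Recall that for any connected smooth projective $k$-scheme $Y$ with base point $y$ there are faithfully flat homomorphisms $\pi^S(Y,y)\twoheadrightarrow \pi^N(Y,y)\twoheadrightarrow \pi^{\et}(Y,y)$. The key structural fact I would use is that these quotient maps are \emph{natural}: the functor $\mathscr G$ of Proposition \ref{main-correspondence}, when restricted to the subcategories of essentially finite (resp. finite, resp. \'etale) bundles, lands in the corresponding subcategory on $X^n$. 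Concretely, the construction of $\widetilde f$ in \eqref{Pi-S-ab-homomorphism} is induced by a Tannakian functor on the whole category $\mc C^{\rm nf}$, and essential finiteness is detected by the finiteness of the associated subgroup scheme; so the morphism $\widetilde f$ fits into a commutative diagram with the canonical surjections, giving candidate maps $\widetilde f^{\,N}$ and $\widetilde f^{\,\et}$.

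The first step is therefore to check that $\mathscr G$ preserves essentially finite and finite bundles. For the $N$-case: if $E$ on $\Hilb^n_X$ is essentially finite, write it (as in the definition) as $V_1/V_2$ with $V_2\subseteq V_1\subseteq \bigoplus F_i$, each $F_i$ finite; since $\mathscr G$ is an exact-on-the-relevant-open-subset additive tensor functor (Proposition \ref{additive-functor}) and sends trivial bundles to trivial bundles, it sends finite bundles to finite bundles (the polynomial relation $f(F)\cong g(F)$ is transported by the tensor functor), hence $\mathscr G(E)$ is a subquotient of a sum of finite bundles on $X^n$, i.e. essentially finite; and it is manifestly $S_n$-invariant. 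The second step is to run the faithful-flatness and closed-immersion arguments of \S5 verbatim inside the essentially finite subcategory: Proposition \ref{faithfully-flatness} produces, for a subbundle of $\mathscr G(E_1)$, a subbundle of $E_1$, and all bundles involved (kernels, the line bundle quotient $\mc L$, its descent $L'$ and pullback $L=\varphi^*L'$) remain essentially finite because descent along the finite surjection $\psi$ and pullback along $\varphi$ preserve essential finiteness; similarly the construction $E=(i_*\varphi^*((\psi_*\mc G)^{S_n}\vert_T))^{\dv}$ of Proposition \ref{closed-immersion}, applied to an $S_n$-invariant essentially finite $\mc G$, yields an essentially finite $E$. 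Thus the induced map $\widetilde f^{\,N}:\pi^N(X,x)_{\rm ab}\to\pi^N(\Hilb^n_X,\widetilde{nx})$ is both faithfully flat and a closed immersion, hence an isomorphism.

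For the \'etale case I would use the comparison of Nori's with the \'etale fundamental group: $\pi^{\et}(Y,y)$ is the pro-\'etale (equivalently, the maximal pro-finite-\'etale, or in char $p$ the quotient classifying bundles trivialized by a finite \'etale cover) quotient of $\pi^N(Y,y)$, and this quotient is functorial. Since abelianization is exact, $(-)_{\rm ab}$ commutes with passing from $\pi^N$ to $\pi^{\et}$, so the isomorphism $\widetilde f^{\,N}$ descends to an isomorphism $\widetilde f^{\,\et}:\pi^{\et}(X,x)_{\rm ab}\to\pi^{\et}(\Hilb^n_X,\widetilde{nx})$. Alternatively, and more cheaply, one can simply invoke \cite[Theorem 1.1, Theorem 1.2]{BH}, which already identifies $\pi^{\et}(\Hilb^n_X,\widetilde{nx})$ with $\pi^{\et}(X,x)_{\rm ab}$, and observe that the map produced here agrees with theirs because both are induced by $\varphi_*$ on the relevant category of \'etale covers.

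The main obstacle I anticipate is purely bookkeeping: verifying that every sheaf-theoretic operation used in \S4--\S5 (pushforward $\varphi_*$ on the large open set, reflexive extension $(j_*-)^{\dv}$, $S_n$-invariant pushforward $(\psi_*-)^{S_n}$, descent of line bundles, Frobenius pullback) carries the subcategory ${\rm EF}(\Hilb^n_X)$ into ${\rm EF}(X^n)$ and back. The substantive point among these is that $\mathscr G$ genuinely preserves \emph{finiteness} (not merely numerical flatness) — this rests on $\mathscr G$ being a tensor functor together with exactness of the sequences \eqref{eqn-4} and \eqref{eqn-3}, which let one transport a relation $f(E)\cong g(E)$; everything else is a formal consequence of the Tannakian dictionary and the fact that $\pi^S\to\pi^N\to\pi^{\et}$ is a chain of faithfully flat maps compatible with the morphism $\widetilde f$. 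Given Theorem \ref{main-thm-S-fgs}, no new geometric input is required.
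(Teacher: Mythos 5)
Your proposal is correct and follows essentially the same route as the paper: both hinge on the observation that the additive tensor functor $\mathscr G$ of Proposition \ref{main-correspondence} carries finite bundles to finite bundles (transport of the polynomial relation) and hence essentially finite bundles to essentially finite bundles, after which the Tannakian arguments of \S5 are transferred to the essentially finite subcategories, the \'etale case following from the Nori case via the pro-\'etale quotient (the paper states exactly these reductions and leaves the remaining details to the reader). The only difference is cosmetic: the paper embeds an essentially finite bundle into a finite bundle via the proof of \cite[Proposition 3.8]{No1}, whereas you work directly with the subquotient presentation in the definition.
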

Let $E$ be an essentially finite vector bundle over a connected,
reduced and proper $k$-scheme $X$. Then there is a finite $k$-group
scheme $G$, a principal $G$-bundle $p : P \to X$ and a finite 
dimensional $k$-linear representation $\rho : G \to \GL(V')$ such that 
$E$ is the vector bundle associated to the representation $\rho$. 
It follows from the proof of \cite[Proposition 3.8]{No1} that there 
is a finite vector bundle $\mc V$ on $X$ such that $E$ is a subbundle of $\mc V$. 
It is clear that the functor $\mathscr G$ in Proposition \ref{main-correspondence}
takes a finite vector bundle to a finite vector bundle. It easily follows 
that $\mathscr G$ takes essentially finite vector bundles to essentially finite 
vector bundles. With these remarks we leave the details of the proof of Theorem \ref{main-thm-N-fgs}
to the reader. 

\vspace{1cm}
\noindent
\textbf{Declaration of competing interest.} \\ 
The authors declare that they have no conflict of interest.

\newcommand{\etalchar}[1]{$^{#1}$}

\end{document}